\documentclass[leqno,12pt]{article} %leqno is the option to put formula numbers on the left side
\setlength{\textheight}{23cm} \setlength{\textwidth}{16cm}
\setlength{\oddsidemargin}{0cm} \setlength{\evensidemargin}{0cm}
\setlength{\topmargin}{0cm}
\usepackage{amsmath, amssymb}
\usepackage{amsthm} %theorem environment option
\usepackage{pictexwd,dcpic}
\usepackage{xfrac}
\usepackage{xypic}

%
 %changing the interline spacing
 %footnote counter
%
%%%%%%%%% Theorem-like environments %%%%%%%%%%%
%
\theoremstyle{plain} %text of this environment is typesetted in italics
\newtheorem{theorem}{\indent\sc Theorem}[section]
\newtheorem{lemma}[theorem]{\indent\sc Lemma}
\newtheorem{corollary}[theorem]{\indent\sc Corollary}
\newtheorem{proposition}[theorem]{\indent\sc Proposition}

\theoremstyle{definition} %text of this environment is typesetted in roman letters

\newtheorem{remark}[theorem]{\indent\sc Remark}

\newtheorem{notation}[theorem]{\indent\sc Notation}

%
%If a theorem-like environment should not be numbered,
%add * after \newtheorem, and delete the counter option such as [theorem].

%
%%%%% Proof %%%%%
\numberwithin{equation}{section}

\def\F{\mathcal{F}}

\def\o{\omega}

\def\G{\Gamma}
\def\O{\mathcal{O}}
\def\Z{\zeta}

%The following commands are available in the proof environment:
%\begin{proof}
%\end{proof}
%The end of a proof is marked with a square.
%%%%%%%%%%%%%%%%%%%%%%%%%%%%%%%%%%%%%%%%%
\makeatletter
%The following command is available for address and e-mail address:
%\address{<address>}{<E-mail address>}
%%%%%%%% definition of "\address" command %%%%%%%%%%%
\def\address#1#2{\begingroup
\noindent\parbox[t]{7.8cm}{%
\small{\scshape\ignorespaces#1}\par\vskip1ex
\noindent\small{\itshape E-mail address}%
\/: #2\par\vskip4ex}\hfill%
\endgroup}%

%%%%%%%%%%
\makeatother
%%%%%%%%%%%%%%%%%%%%%%%%%%%%%%%%%%%%%%%%%
%
\title{{Generators of the ring of weakly holomorphic modular functions for $\G_1(N)$ }} %title of the paper
\author{
\textsc{Ja Kyung Koo and Dong Sung Yoon} %names of authors
}

\date{} %leave empty
%
%%%%%%%%%%%%%%%%%%%%%%%

\begin{document}

\maketitle

%%%%%%%%%%%%%%% footnote %%%%%%%%%%%%%%%%
\footnote{ %2010 MSC numbers
2010 \textit{Mathematics Subject Classification}. 11F03 (primary), 11F11, 11G16 (secondary). }
\footnote{ %key words and phrases
\textit{Key words and phrases}. modular functions, modular units} \footnote{
\thanks{
The second named author was supported by the National Institute for Mathematical Sciences, Republic of Korea.
} }
%%%%%%%%%%%%%%%%%%%%%%%%%%%%%%%%%%%%%%%%%$$$$$$$$$$$$$$$$$$$$$$$$$$$$$$$$$$$$$$$$$$$$$$$$$$$$$$$$$$$$$$$$$$$$$$$$$$$$$$$$$$$$

\begin{abstract}
For a positive integer $N$ divisible by $4,5,6,7$ or $9$, let $\mathcal{O}_{1,N}(\mathbb{Q})$ be the ring of weakly holomorphic modular functions for the congruence subgroup $\Gamma_1(N)$ with rational Fourier coefficients. 
We present explicit generators of the ring $\mathcal{O}_{1,N}(\mathbb{Q})$ over $\mathbb{Q}$ by making use of modular units which have infinite product expansions.

\end{abstract}

\tableofcontents

\maketitle

\section{Introduction}
Let $\G$ be a congruence subgroup of $\mathrm{SL}_2(\mathbb{Z})$.
A meromorphic modular function for $\G$ is called a \textit{weakly holomorphic} if its poles (if any) are supported only at the cusps of $\G$. 
For a positive integer $N$, let
\begin{equation*}
\begin{array}{ccl}
\G(N)&=&\left\{\left[
\begin{matrix}
a&b\\
c&d
\end{matrix}\right] \in \mathrm{SL}_{2}(\mathbb{Z})~\big|~
\left[\begin{matrix}
a&b\\
c&d
\end{matrix}\right]
\equiv 
\left[\begin{matrix}
1&0\\
0&1
\end{matrix}\right]
\pmod{N} \right\},\vspace{0.2cm}\\
\G_1(N)&=&\left\{\left[
\begin{matrix}
a&b\\
c&d
\end{matrix}\right] \in \mathrm{SL}_{2}(\mathbb{Z})~\big|~
\left[\begin{matrix}
a&b\\
c&d
\end{matrix}\right]
\equiv 
\left[\begin{matrix}
1&*\\
0&1
\end{matrix}\right]
\pmod{N} \right\},\vspace{0.2cm}\\
\G^1(N)&=&\left\{\left[
\begin{matrix}
a&b\\
c&d
\end{matrix}\right] \in \mathrm{SL}_{2}(\mathbb{Z})~\big|~
\left[\begin{matrix}
a&b\\
c&d
\end{matrix}\right]
\equiv 
\left[\begin{matrix}
1&0\\
*&1
\end{matrix}\right]
\pmod{N} \right\}.
\end{array}
\end{equation*}
By definition, we have $\G\supset\G(N)$ for some positive integer $N$. 
Furthermore, one can check that
\begin{equation*}
\left[
\begin{matrix}
N & 0\\
0 & 1
\end{matrix}\right]^{-1}
\G(N)
\left[
\begin{matrix}
N & 0\\
0 & 1
\end{matrix}\right]
\supset \G_1(N^2).
\end{equation*}
Hence if $f(\tau)$ is a modular function for $\G(N)$, then $f(N\tau)$ is a modular function for $\G_1(N^2)$.
Here we note that the Fourier coefficients of $f(N\tau)$ coincide with those of $f(\tau)$.
Therefore, the study of modular functions with respect to congruence subgroups is reduced to that of modular functions for $\G_1(N)$.

\par
For a rational vector $\mathbf{r}=\left[\begin{matrix}r_1\\r_2\end{matrix}\right]\in\mathbb{Q}^2\setminus\mathbb{Z}^2$, we define the \textit{Siegel function} $g_{\mathbf{r}}(\tau)$ on the complex upper half plane $\mathbb{H}$ by the following infinite product expansion
\begin{equation}\label{Siegel expansion}
g_{\mathbf{r}}(\tau)=-q^{\frac{1}{2}\mathbf{B}_2(r_1)}e^{\pi i r_2(r_1-1)}(1-q^{r_1}e^{2\pi i r_2})\prod_{n=1}^\infty(1-q^{n+r_1}e^{2\pi i r_2})(1-q^{n-r_1}e^{-2\pi i r_2}),
\end{equation}
where $\mathbf{B}_2(X)=X^2-X+1/6$ is the second Bernoulli polynomial and $q=e^{2\pi i\tau}$.
As is well known (\cite{Siegel} or \cite[p.36]{Kubert}), it is a modular unit, that is, both zeros and poles are supported at the cusps of certain congruence subgroup.
\par
Now, let $\O_{1,N}(\mathbb{Q})$ (resp. $\O^1_N(\mathbb{Q})$) be the ring of weakly holomorphic modular functions for $\Gamma_1(N)$ (resp. $\G^1(N)$) with rational Fourier coefficients.
When $N\equiv 0\pmod{4}$, Eum et al (\cite{E-K-S}) recently constructed explicit generators of $\O^1_N(\mathbb{Q})$ by means of Siegel functions and classify all Fricke families of such level $N$, which will be defined in $\S$\ref{Fricke families of level $N$}.
\par
In this paper we shall investigate how to generate the ring $\O_{1,N}(\mathbb{Q})$ for arbitrary $N$ by improving their idea.
We first construct generators of $\O_{1,N}(\mathbb{Q})$ over $\mathbb{Q}$ for $1\leq N\leq 10$ and $N=12$ by utilizing Siegel functions (Theorem \ref{generator}).
And, for given positive integers $m>3$ and $N$ divisible by $m$ we will further present a primitive generator of $\O_{1,N}(\mathbb{Q})$ over $\O_{1,m}(\mathbb{Q})$, which is a Weierstrass unit (Theorem \ref{generator2}), from which we are able to determine generators of the ring $\O_{1,N}(\mathbb{Q})$ over $\mathbb{Q}$ when $N$ is divisible by $4,5,6,7$ or $9$ (Corollary \ref{main corollary}).
Lastly, as byproduct, we can classify all Fricke families of such level $N$ (Theorem \ref{classify Fricke families}). 

\begin{notation}
The transpose of a matrix $\alpha$ is denoted by ${^t}\alpha$.
If $R$ is a ring with unity, $R^\times$ stands for the group of all invertible elements of $R$.
For a positive integer $N$, let $\zeta_N=e^{2\pi i/N}$ be a primitive $N$-th root of unity.
\end{notation}

\section{Galois actions of modular function fields}
In this section we shall describe the action of Galois groups between modular function fields.

For a positive integer $N$, let
$\mathcal{F}_N$ be the field of meromorphic modular functions for $\G(N)$ whose Fourier coefficients lie in the $N$th cyclotomic field $\mathbb{Q}(\zeta_N)$.
\begin{proposition}\label{modular function action}
$\mathcal{F}_N$ is a Galois extension of $\mathcal{F}_1$ and 
\begin{equation*}
\mathrm{Gal}(\mathcal{F}_N/\mathcal{F}_1)\cong \mathrm{GL}_2(\mathbb{Z}/N\mathbb{Z})/\{\pm I_2\}\cong G_N\cdot \mathrm{SL}_2(\mathbb{Z}/N\mathbb{Z})/\{\pm I_2\},
\end{equation*}
where 
\begin{equation*}
G_N=\left\{\left[
\begin{matrix}
1&0\\
0&d
\end{matrix}\right]~|~d\in(\mathbb{Z}/N\mathbb{Z})^\times
\right\}.
\end{equation*}
More precisely, $\mathrm{GL}_2(\mathbb{Z}/N\mathbb{Z})/\{\pm I_2\}$ acts on $\mathcal{F}_N$ as follows:

\begin{itemize}
\item[\textup{(i)}]
The element 
$\left[
\begin{matrix}
1&0\\
0&d
\end{matrix}\right]\in G_N$ acts on $h\in\mathcal{F}_N$ by
\begin{equation*}
h(\tau)=\sum_{n\gg-\infty}c_n q^{{n}/{N}}\longmapsto \sum_{n\gg-\infty}c_n^{\sigma_d} q^{{n}/{N}},
\end{equation*}
where $\sum_{n\gg-\infty}c_n q^{{n}/{N}}$ is the Fourier expansion of $h$ and 
$\sigma_d\in\mathrm{Gal}(\mathbb{Q}(\zeta_N)/\mathbb{Q})$ satisfies $\zeta_N^{\sigma_d}=\zeta_N^d$.
\item[\textup{(ii)}]
The element $\gamma\in \mathrm{SL}_2(\mathbb{Z}/N\mathbb{Z})/\{\pm I_2\}$ acts on $h\in\mathcal{F}_N$ by 
\begin{equation*}
h(\tau)^\gamma=(h\circ\widetilde{\gamma})(\tau), 
\end{equation*}
where $\widetilde{\gamma}$ is a preimage of $\gamma$ of the reduction map $\mathrm{SL}_2(\mathbb{Z})\rightarrow \mathrm{SL}_2(\mathbb{Z}/N\mathbb{Z})/\{\pm I_2\}$.
\end{itemize}
\end{proposition}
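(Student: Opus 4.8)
The plan is to construct a faithful action of $\bar G:=\mathrm{GL}_2(\mathbb{Z}/N\mathbb{Z})/\{\pm I_2\}$ on $\mathcal{F}_N$ by field automorphisms that fixes $\mathcal{F}_1$ pointwise and whose fixed field is exactly $\mathcal{F}_1$; by Artin's theorem this at once yields that $\mathcal{F}_N/\mathcal{F}_1$ is Galois with group $\bar G$, and the prescriptions (i)--(ii) are simply the recipe for this action (the product decomposition $\bar G=G_N\cdot\overline{\mathrm{SL}}_2$, where $\overline{\mathrm{SL}}_2:=\mathrm{SL}_2(\mathbb{Z}/N\mathbb{Z})/\{\pm I_2\}$, being forced since $\overline{\mathrm{SL}}_2=\ker\det$ is normal in $\bar G$ and $\det$ restricts to an isomorphism $G_N\cong(\mathbb{Z}/N\mathbb{Z})^\times$). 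I would take as known the following standard facts about fields of modular functions (see Shimura, \emph{Introduction to the arithmetic theory of automorphic functions}, Ch.~6, or Lang, \emph{Elliptic functions}, Ch.~6): $\mathcal{F}_1=\mathbb{Q}(j)$ for the elliptic modular function $j$, with the feature that any element of $\mathbb{C}(j)$ having rational Fourier coefficients already lies in $\mathbb{Q}(j)$; $\mathcal{F}_N$ is generated over $\mathbb{Q}(j)$ by $\zeta_N$ together with the Fricke functions $f_{\mathbf{a}}(\tau)=-2^7 3^5\, g_2(\tau)g_3(\tau)\Delta(\tau)^{-1}\wp(a_1\tau+a_2;[\tau,1])$, $\mathbf{a}={}^{t}(a_1,a_2)\in(\tfrac1N\mathbb{Z})^2\setminus\mathbb{Z}^2$, each of which is a modular function for $\G(N)$ whose Fourier expansion at $\infty$ has coefficients in $\mathbb{Q}(\zeta_N)$; the constant field of $\mathcal{F}_N$ is exactly $\mathbb{Q}(\zeta_N)$; and $[\mathcal{F}_N:\mathbb{Q}(\zeta_N)(j)]=[\mathbb{C}(X(N)):\mathbb{C}(j)]=|\overline{\mathrm{SL}}_2|$, the degree of the modular covering $X(N)\to X(1)$.

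I build the action in two pieces. An element $\gamma\in\overline{\mathrm{SL}}_2$ acts by $h\mapsto h^{\gamma}:=h\circ\widetilde{\gamma}$ as in (ii); since $\G(N)$ is normal in $\mathrm{SL}_2(\mathbb{Z})$ and $-I_2$ acts trivially on $\mathbb{H}$, this is a well-defined field automorphism of the field of all modular functions for $\G(N)$ depending only on $\gamma$, it fixes $j$, and the homogeneity of $\wp$ together with the automorphy weights of $g_2,g_3,\Delta$ yields a transformation formula of the shape $f_{\mathbf{a}}^{\gamma}=f_{{}^{t}\widetilde{\gamma}\,\mathbf{a}}$; hence $\gamma$ preserves the generating set $\{j,\zeta_N\}\cup\{f_{\mathbf{a}}\}$ as well as $\mathbb{Q}$-rationality of coefficients, so it restricts to an automorphism of $\mathcal{F}_N$ fixing both $\mathcal{F}_1$ and $\zeta_N$. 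An element $\mathrm{diag}(1,d)\in G_N$ acts by applying $\sigma_d$ to Fourier coefficients as in (i); a priori this is only a ring endomorphism of formal Laurent series, but $j$ has rational coefficients, $\zeta_N^{\sigma_d}=\zeta_N^{d}\in\mathcal{F}_N$, and the explicit $q$-expansion of the Fricke function gives $f_{\mathbf{a}}^{\sigma_d}=f_{\mathrm{diag}(1,d)\,\mathbf{a}}$, so the image of each generator --- hence of all of $\mathcal{F}_N$ --- remains in $\mathcal{F}_N$; applying $\sigma_{d^{-1}}$ shows it is an automorphism, fixing $\mathcal{F}_1$ and moving $\zeta_N$ when $d\neq 1$. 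I then let an arbitrary $M\in\bar G$ act through its factorization $M=\mathrm{diag}(1,\det M)\cdot\gamma$ ($\gamma\in\overline{\mathrm{SL}}_2$) by the corresponding composite automorphism; multiplicativity in $M$ is transparent because on the generators the two formulas above combine into a single linear action of $M$ on the index vectors $\mathbf{a}$ (with $j\mapsto j$), together with the normality of $\overline{\mathrm{SL}}_2$ in $\bar G$. This action is faithful: if $\gamma\in\overline{\mathrm{SL}}_2$ acts trivially on $\mathcal{F}_N$, then after extending scalars it acts trivially on $\mathbb{C}(X(N))$, where $\overline{\mathrm{SL}}_2$ acts faithfully as the deck group of $X(N)\to X(1)$, so $\gamma=1$; and if $\mathrm{diag}(1,d)$ acts trivially then $\zeta_N=\zeta_N^{d}$, so $d=1$.

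Finally I identify the fixed field. By Artin's theorem the faithful action gives $[\mathcal{F}_N:\mathcal{F}_N^{\bar G}]=|\bar G|=|\overline{\mathrm{SL}}_2|\,\varphi(N)$, while $\mathcal{F}_1=\mathbb{Q}(j)\subseteq\mathcal{F}_N^{\bar G}$ by the construction above. On the other hand $[\mathcal{F}_N:\mathbb{Q}(j)]=[\mathcal{F}_N:\mathbb{Q}(\zeta_N)(j)]\,[\mathbb{Q}(\zeta_N)(j):\mathbb{Q}(j)]=|\overline{\mathrm{SL}}_2|\,\varphi(N)$, the last factor being $\varphi(N)$ because $j$ is transcendental over $\mathbb{Q}$. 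Comparing the two computations forces $\mathcal{F}_N^{\bar G}=\mathbb{Q}(j)=\mathcal{F}_1$, so by Artin's theorem $\mathcal{F}_N/\mathcal{F}_1$ is Galois with group $\bar G=G_N\cdot\overline{\mathrm{SL}}_2$, acting precisely as in (i) and (ii) by construction. I expect the real work to be concentrated in the structural inputs of the first paragraph --- above all the assertion that the constant field of $\mathcal{F}_N$ is no larger than $\mathbb{Q}(\zeta_N)$ (equivalently, geometric irreducibility of the canonical model of $X(N)$ over $\mathbb{Q}(\zeta_N)$) and the degree identity $[\mathbb{C}(X(N)):\mathbb{C}(j)]=|\overline{\mathrm{SL}}_2|$ --- together with the verification of the two Fricke-function transformation formulas; everything downstream of these is the bookkeeping sketched above, and for the statement as a whole one may alternatively simply cite Shimura's book.
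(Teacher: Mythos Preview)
Your proposal is correct and is precisely the standard argument; the paper itself gives no proof at all beyond the citation \cite[Chapter 6, Theorem 3]{Lang}, so you have in effect written out what Lang does. One small quibble: the combined action on the Fricke indices is via ${}^{t}M$ rather than $M$ (as the paper records in its Proposition on Fricke functions), so the homomorphism property is that of a right action $f_{\mathbf{a}}^{M_1M_2}=(f_{\mathbf{a}}^{M_1})^{M_2}$; your phrase ``linear action of $M$ on the index vectors'' slightly obscures this, but the substance of your argument is unaffected.
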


\begin{proof}
\cite[Chapter 6, Theorem 3]{Lang}.
\end{proof}

In a similar fashion as Proposition \ref{modular function action} (i), an element $\sigma\in\mathrm{Gal}(\mathbb{Q}_\mathrm{ab}/\mathbb{Q})$ induces an element of $\mathrm{Gal}(\F_N/\F_1)$ by applying $\sigma$ to the Fourier coefficients.
For $h\in\F_N$, we denote by $h^\sigma$ the image of $h$ under this automorphism.

\section{Fricke functions and Siegel functions} 

For a lattice $L$ in $\mathbb{C}$, we let
\begin{eqnarray*}
g_2(L)=60\sum_{\o\in L\setminus\{0\}}\frac{1}{\o^4},\quad 
g_3(L)=140\sum_{\o\in L\setminus\{0\}}\frac{1}{\o^6},\quad
\Delta(L)=g_2(L)^3-27g_3(L)^2.
\end{eqnarray*}
And, we define the \textit{Weierstrass $\wp$-function} (relative to $L$) and the \textit{$j$-invariant} by
\begin{eqnarray*}
\wp(z;L)&=&\frac{1}{z^2}+\sum_{\o\in L\setminus\{0\}}\left(\frac{1}{(z-\o)^2}-\frac{1}{\o^2} \right)\quad(z\in\mathbb{C}),\\
j(\tau)&=&1728\frac{g_2(\tau)}{\Delta(\tau)}\quad (\tau\in\mathbb{H}),
\end{eqnarray*}
where $g_2(\tau)=g_2([\tau,1])$, $g_3(\tau)=g_3([\tau,1])$ and $\Delta(\tau)=\Delta([\tau,1])$.
For a rational vector $\mathbf{r}=\left[\begin{matrix}r_1\\r_2\end{matrix}\right]\in\mathbb{Q}^2\setminus\mathbb{Z}^2$, the \textit{Fricke function} is defined by
\begin{equation*}
f_{\mathbf{r}}(\tau)=-2^7 3^5\frac{g_2(\tau)g_3(\tau)}{\Delta(\tau)}\wp(r_1\tau+r_2;[\tau,1])\qquad(\tau\in\mathbb{H}).
\end{equation*}
Then $\mathbb{Q}[j(\tau)]$ is the ring of weakly holomorphic functions in $\mathcal{F}_1$ and we have
\begin{equation*}
\mathcal{F}_N=\left\{
\begin{array}{ll}
\mathbb{Q}(j(\tau))&\textrm{if $N=1$}\vspace{0.1cm}\\
\mathbb{Q}\left(j(\tau),f_{\mathbf{r}}(\tau) : \mathbf{r}\in\frac{1}{N}\mathbb{Z}^2\setminus\mathbb{Z}^2\right)&\textrm{if $N>1$}
\end{array}\right.
\end{equation*}
(\cite[Chapter 5, Theorem 2 and Chapter 6, \S3]{Lang}).

\begin{proposition}\label{properties of Fricke}
Let $N~(>1)$ be an integer and $\mathbf{r}\in(1/N)\mathbb{Z}^2\setminus\mathbb{Z}^2$.
\begin{itemize}
\item[\textup{(i)}] $f_\mathbf{r}(\tau)$ is weakly holomorphic and depends only on $\pm\mathbf{r}\pmod{\mathbb{Z}^2}$.
\item[\textup{(ii)}] For $\alpha\in\mathrm{GL}_2(\mathbb{Z}/N\mathbb{Z})/\{\pm I_2\}$ we obtain
\begin{equation*}
f_\mathbf{r}(\tau)^\alpha=f_{{^t}\alpha\mathbf{r}}(\tau).
\end{equation*}
\end{itemize}
\end{proposition}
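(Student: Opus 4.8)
The plan is to prove Proposition~\ref{properties of Fricke} by unwinding the definition of the Fricke function in terms of the Weierstrass $\wp$-function and exploiting the well-known transformation behaviour of $\wp$, $g_2$, $g_3$ and $\Delta$.

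For part (i), I would first recall that $g_2(\tau)$, $g_3(\tau)$ and $\Delta(\tau)$ are holomorphic on $\mathbb{H}$ with $\Delta$ nowhere vanishing, so the coefficient $-2^73^5 g_2(\tau)g_3(\tau)/\Delta(\tau)$ is holomorphic on $\mathbb{H}$; it is in fact a weakly holomorphic modular form of weight $0$ for $\mathrm{SL}_2(\mathbb{Z})$ up to the pole behaviour at the cusp, and more precisely it equals $j(\tau)\bigl(j(\tau)-1728\bigr)$ up to a constant, hence lies in $\mathbb{Q}[j]$. Next, $\wp(r_1\tau+r_2;[\tau,1])$ is a meromorphic function of $\tau$ whose only poles occur where $r_1\tau+r_2\in[\tau,1]$, i.e.\ at points $\tau$ for which $(r_1,r_2)$ becomes an integral vector modulo the lattice; since $\mathbf{r}\in(1/N)\mathbb{Z}^2\setminus\mathbb{Z}^2$ this never happens for $\tau\in\mathbb{H}$, so $f_\mathbf{r}$ is holomorphic on $\mathbb{H}$ and its only possible poles lie at the cusps, giving weak holomorphy. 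The dependence only on $\pm\mathbf{r}\pmod{\mathbb{Z}^2}$ follows from two elementary facts about $\wp$: first, $\wp(z;[\tau,1])$ is periodic with respect to the lattice $[\tau,1]$, so shifting $\mathbf{r}$ by an element of $\mathbb{Z}^2$ shifts the argument $r_1\tau+r_2$ by a lattice point and leaves $\wp$ unchanged; second, $\wp$ is an even function of $z$, so replacing $\mathbf{r}$ by $-\mathbf{r}$ replaces the argument by its negative and again leaves $\wp$ unchanged.

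For part (ii), the strategy is to decompose $\alpha\in\mathrm{GL}_2(\mathbb{Z}/N\mathbb{Z})/\{\pm I_2\}$ according to the factorisation $\mathrm{GL}_2(\mathbb{Z}/N\mathbb{Z})/\{\pm I_2\}\cong G_N\cdot\mathrm{SL}_2(\mathbb{Z}/N\mathbb{Z})/\{\pm I_2\}$ from Proposition~\ref{modular function action} and verify the identity $f_\mathbf{r}^\alpha=f_{{}^t\alpha\mathbf{r}}$ separately on the two types of generators, then combine. For $\gamma\in\mathrm{SL}_2(\mathbb{Z}/N\mathbb{Z})/\{\pm I_2\}$, lift to $\widetilde\gamma=\left[\begin{smallmatrix}a&b\\c&d\end{smallmatrix}\right]\in\mathrm{SL}_2(\mathbb{Z})$; since $g_2$, $g_3$, $\Delta$ are modular forms of weights $4,6,12$, the prefactor $-2^73^5 g_2g_3/\Delta$ is invariant under $\tau\mapsto\widetilde\gamma\tau$, while the homogeneity of $\wp$ under $z\mapsto z/(c\tau+d)$, $L\mapsto L/(c\tau+d)$ together with the computation $[\widetilde\gamma\tau,1]=(c\tau+d)^{-1}[a\tau+b,c\tau+d]$ shows that $\wp\bigl(r_1\widetilde\gamma\tau+r_2;[\widetilde\gamma\tau,1]\bigr)=(c\tau+d)^2\wp\bigl((r_1a+r_2c)\tau+(r_1b+r_2d);[\tau,1]\bigr)$; one checks the weight-$2$ factor $(c\tau+d)^2$ is absorbed by the normalising constant so that the net effect is $f_\mathbf{r}(\widetilde\gamma\tau)=f_{{}^t\widetilde\gamma\mathbf{r}}(\tau)$, matching the action in Proposition~\ref{modular function action}(ii). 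For the diagonal element $\left[\begin{smallmatrix}1&0\\0&d\end{smallmatrix}\right]\in G_N$, which acts by $\sigma_d$ on Fourier coefficients, I would use the known $q$-expansion of $f_\mathbf{r}$ (expressible via Siegel functions or directly via the Fourier expansion of $\wp$) to see that applying $\sigma_d$ to the coefficients — equivalently replacing $\zeta_N$ by $\zeta_N^d$ throughout — sends $f_\mathbf{r}$ to $f_{\mathbf{r}'}$ where $\mathbf{r}'=\left[\begin{smallmatrix}r_1\\dr_2\end{smallmatrix}\right]={}^t\!\left[\begin{smallmatrix}1&0\\0&d\end{smallmatrix}\right]\mathbf{r}$.

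The main obstacle I anticipate is the bookkeeping in part (ii): correctly tracking the automorphy factors $(c\tau+d)$ through the homogeneity relations $\wp(\lambda z;\lambda L)=\lambda^{-2}\wp(z;L)$, $g_2(\lambda L)=\lambda^{-4}g_2(L)$, etc., and confirming that all the weight factors cancel so that the transpose $\mathbf{r}\mapsto{}^t\alpha\mathbf{r}$ (rather than $\alpha\mathbf{r}$, or some sign variant) emerges. The appearance of the transpose is the subtle point and comes precisely from the fact that $\wp$ evaluated at $r_1\tau+r_2=\mathbf{r}\cdot(\tau,1)$ pairs $\mathbf{r}$ with the lattice basis as a row-times-column product, so a change of basis of the lattice by $\widetilde\gamma$ acts on $\mathbf{r}$ through ${}^t\widetilde\gamma$. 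One should also be careful that the whole identity is only claimed modulo $\pm I_2$, which is harmless by part (i). Everything else — weak holomorphy, the evenness and periodicity of $\wp$ — is routine once the definitions are written out.
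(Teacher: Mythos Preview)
The paper does not prove this proposition; it simply cites Lang, \textit{Elliptic Functions}, Chapter~6, \S\S2--3. Your sketch is essentially the argument found there, so in spirit you are reproducing what the paper defers to.

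There is, however, a concrete bookkeeping error in your part~(ii) that you should fix. You assert that the prefactor $-2^7 3^5\, g_2 g_3/\Delta$ is invariant under $\tau\mapsto\widetilde\gamma\tau$, and that it equals $j(j-1728)$ up to a constant. Neither is right: the weights are $4+6-12=-2$, not $0$, so the prefactor transforms with a factor $(c\tau+d)^{-2}$, and in particular it is not a weight-zero quantity like $j(j-1728)$. This weight~$-2$ factor is exactly what cancels the $(c\tau+d)^{2}$ arising from the homogeneity of $\wp$, which you do invoke a few lines later (``absorbed by the normalising constant''). So your conclusion $f_\mathbf{r}(\widetilde\gamma\tau)=f_{{}^t\widetilde\gamma\,\mathbf{r}}(\tau)$ is correct, but the explanation as written is internally inconsistent: if the prefactor really were invariant there would be nothing to absorb the $(c\tau+d)^{2}$. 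Correct the weight of $g_2 g_3/\Delta$ to $-2$, drop the $j(j-1728)$ remark (which plays no role anyway), and the argument is clean.
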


\begin{proof}
\cite[Chapter 6, \S2-\S3]{Lang}
\end{proof}

The following two propositions describe the modularity criterion for Siegel functions and the relation between Fricke functions and Siegel functions.

\begin{proposition}\label{property of Siegel functions}
Let $N~(>1)$ be an integer and $\{m(\mathbf{r})\}_{\mathbf{r}=\left[\begin{smallmatrix}r_1\\r_2\end{smallmatrix}\right]\in\frac{1}{N}\mathbb{Z}^2\setminus\mathbb{Z}^2}$ be a family of integers such that $m(\mathbf{r})=0$ except for finitely many $\mathbf{r}$.
\begin{itemize}

\item[\textup{(i)}]
If $\sum_\mathbf{r}m(\mathbf{r})\equiv 0\pmod{12}$, then for $\gamma\in\mathrm{SL}_2(\mathbb{Z})$ we get
\begin{equation*}
\prod_{\mathbf{r}}g_\mathbf{r}(\tau)^{m(\mathbf{r})}\circ \gamma=\prod_\mathbf{r}g_{{^t}\gamma \mathbf{r}}(\tau)^{m(\mathbf{r})}.
\end{equation*}
\item[\textup{(ii)}] A finite product of Siegel functions
\begin{equation*}
\zeta\prod_{\mathbf{r}}g_{\mathbf{r}}(\tau)^{m(\mathbf{r})}
\end{equation*}
belongs to $\mathcal{F}_N$ if 
\begin{equation*}
\begin{array}{l}
\displaystyle\sum_{\mathbf{r}}m(\mathbf{r})(Nr_1)^2\equiv\sum_{\mathbf{r}}m(\mathbf{r})(Nr_2)^2\equiv 0\pmod{\mathrm{gcd}(2,N)\cdot N},\vspace{0.2cm}\\
\displaystyle\sum_{\mathbf{r}}m(\mathbf{r})(Nr_1)(Nr_2)\equiv 0\pmod{N},\vspace{0.2cm}\\
\displaystyle\sum_{\mathbf{r}}m(\mathbf{r})\cdot\mathrm{gcd}(12,N)\equiv 0\pmod{12}.
\end{array}
\end{equation*}
Here, 
\begin{equation*}
\zeta=\prod_{\mathbf{r}}e^{\pi i r_2(1-r_1)m(\mathbf{r})}\in\mathbb{Q}(\zeta_{2N^2}).
\end{equation*}
\end{itemize}
\end{proposition}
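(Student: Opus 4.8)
The plan is to prove both parts by reducing everything to the known transformation behaviour of Siegel functions under $\mathrm{SL}_2(\mathbb Z)$ and under translations of the vector $\mathbf r$ by $\mathbb Z^2$. For part (i), recall the two basic functional equations of $g_{\mathbf r}$: first, $g_{\gamma\mathbf r}=g_{{}^t\gamma\mathbf r}\circ\gamma$ up to a root of unity depending on $\gamma$ and $\mathbf r$ (this comes from the transformation law of the Klein forms / Siegel functions, e.g. \cite[p.~27--29]{Kubert}); second, $g_{\mathbf r+\mathbf n}=\varepsilon(\mathbf r,\mathbf n)\,g_{\mathbf r}$ for $\mathbf n\in\mathbb Z^2$ with $\varepsilon(\mathbf r,\mathbf n)$ again a root of unity. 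I would take the product $\prod_{\mathbf r}g_{\mathbf r}^{m(\mathbf r)}$, apply $\gamma$ termwise, and collect the accumulated roots of unity into a single factor $\varepsilon$. The point is that the discrepancy $\varepsilon$ is a function only of the $m(\mathbf r)$ and $\gamma$, and the standard formula (see Kubert--Lang) shows that the exponent of the relevant root of unity is a linear combination of the $m(\mathbf r)$ with integer coefficients times something divisible by $12$ once $\sum_{\mathbf r}m(\mathbf r)\equiv 0\pmod{12}$; hence $\varepsilon=1$ and the clean identity $\prod_{\mathbf r}g_{\mathbf r}^{m(\mathbf r)}\circ\gamma=\prod_{\mathbf r}g_{{}^t\gamma\mathbf r}^{m(\mathbf r)}$ follows. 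This is essentially bookkeeping with the explicit cocycle appearing in the Siegel function transformation law.

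For part (ii), I would use the criterion of Kubert--Lang (\cite[Chapter 3]{Kubert}) characterising when a product of Siegel (or Klein) functions is invariant under $\G(N)$. Concretely, a $\G(N)$-invariant product — i.e.\ an element of $\F_N$ — is obtained precisely when the "quadratic" conditions on the $Nr_1$, $Nr_2$ hold modulo $\gcd(2,N)\cdot N$ and $N$, together with the degree condition modulo $12$; these conditions guarantee simultaneously that (a) the function is invariant under the subgroup $\G(N)$ acting by $\gamma$ via part (i), using that for $\gamma\equiv I\pmod N$ one has ${}^t\gamma\mathbf r\equiv\mathbf r\pmod{\mathbb Z^2}$ so the exponents $m(\mathbf r)$ are permuted trivially, and (b) the leftover root of unity coming from the finite part of the $q$-expansion (the factor $\prod_{\mathbf r}e^{\pi i r_2(1-r_1)m(\mathbf r)}$, which is exactly the scalar $\zeta$) is absorbed. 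One checks that $\zeta\in\mathbb Q(\zeta_{2N^2})\subset\mathbb Q(\zeta_N)$-closure appropriately, and that multiplying by $\zeta$ makes the leading Fourier coefficient of the product a power of $\zeta_{N}$ times a rational, so the whole expansion lies in $\mathbb Q(\zeta_N)$; together with $\G(N)$-invariance this places $\zeta\prod_{\mathbf r}g_{\mathbf r}^{m(\mathbf r)}$ in $\F_N$.

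In carrying this out the main obstacle is the precise tracking of roots of unity: the transformation law for $g_{\mathbf r}$ under $\mathrm{SL}_2(\mathbb Z)$ involves a somewhat intricate factor (a product of $\zeta_{12}$ and $\zeta_{2N}$ powers built from the entries of $\gamma$ and $\mathbf r$), and verifying that all these contributions cancel under the stated congruences — especially disentangling the mod $12$ condition from the mod $N$ and mod $\gcd(2,N)N$ conditions when $N$ is even versus odd — requires care. Once that cancellation is in hand, the rest is immediate. I would therefore structure the proof as: (1) state the transformation formula for a single $g_{\mathbf r}$ under $\gamma$ and under $\mathbf r\mapsto\mathbf r+\mathbf n$, with explicit root-of-unity factors; (2) multiply, collect exponents, impose the hypotheses, and conclude $\varepsilon=1$ to get (i); (3) apply (i) with $\gamma\in\G(N)$ to get $\G(N)$-invariance of the product for (ii), and separately analyse the $q$-expansion to pin down the field of Fourier coefficients, invoking that $\F_N$ is exactly the $\G(N)$-invariant functions with coefficients in $\mathbb Q(\zeta_N)$. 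For both parts I would simply cite \cite{Kubert} (and \cite{Siegel}) for the underlying single-function transformation laws rather than re-deriving them.
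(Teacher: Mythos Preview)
Your proposal is a correct sketch of the standard argument, but you should know that the paper itself does not give a proof at all: it simply writes ``See \cite[Proposition 2.4]{K-S}, \cite[Chapter 3, Theorem 5.2 and 5.3]{Kubert} (or \cite{Yang}).'' So there is nothing to compare on the level of mathematical strategy; the paper treats this proposition as a black-box import from the literature.

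What you have outlined is essentially the content of those cited proofs: one starts from the transformation law of a single Siegel (or Klein) function under $\mathrm{SL}_2(\mathbb{Z})$ and under $\mathbf r\mapsto \mathbf r+\mathbf n$, multiplies, and tracks the accumulated root-of-unity cocycle. Your identification of the main obstacle---the careful bookkeeping of the $\zeta_{12}$- and $\zeta_{2N}$-type factors, and the case split on the parity of $N$---is exactly right, and is precisely what Kubert--Lang carry out in Chapter~3. If you were writing this paper, the appropriate move would be to do what the authors did and cite the references rather than reproduce several pages of root-of-unity accounting; if you insist on a self-contained proof, your plan is sound and would succeed.
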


\begin{proof}
See \cite[Proposition 2.4]{K-S}, \cite[Chapter 3, Theorem 5.2 and 5.3]{Kubert} (or \cite{Yang}).
\end{proof}

\begin{proposition}\label{Fricke and Siegel}
Let $\mathbf{r},\mathbf{s}\in\mathbb{Q}^2\setminus\mathbb{Z}^2$.
\begin{itemize}
\item[\textup{(i)}] $f_\mathbf{r}(\tau)=f_\mathbf{s}(\tau)$ if and only if $\mathbf{r}\equiv \pm\mathbf{s}\pmod{\mathbb{Z}^2}$.
\item[\textup{(ii)}] If $\mathbf{r}\not\equiv \pm\mathbf{s}\pmod{\mathbb{Z}^2}$, then we obtain
\begin{equation*}
f_\mathbf{r}(\tau)-f_\mathbf{s}(\tau)=2^{7}3^5 \frac{g_2(\tau)g_3(\tau)\eta(\tau)^4}{\Delta(\tau)}\cdot \frac{g_{\mathbf{r}+\mathbf{s}}(\tau) g_{\mathbf{r}-\mathbf{s}}(\tau)}{g_{\mathbf{r}}(\tau)^{2}g_{\mathbf{s}}(\tau)^{2}},
\end{equation*}
where
\begin{equation*}
\eta(\tau)=\sqrt{2\pi}\zeta_8 q^{1/24}\prod_{n=1}^\infty (1-q^n)
\end{equation*}
is the Dedekind $\eta$-function which is a $24$th root of $\Delta(\tau)$.
\end{itemize}
\end{proposition}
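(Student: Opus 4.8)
The plan is to derive part (ii) from the classical factorization of $\wp(u)-\wp(v)$ into Weierstrass $\sigma$-functions, to rewrite the resulting $\sigma$-quotient in terms of Klein forms and hence of Siegel functions, and finally to read off part (i) from that formula. For (ii) I would begin with the identity
\[
\wp(u;L)-\wp(v;L)=-\frac{\sigma(u+v;L)\,\sigma(u-v;L)}{\sigma(u;L)^{2}\,\sigma(v;L)^{2}},
\]
valid for any lattice $L$ whenever $u,v,u\pm v\notin L$. Taking $L=[\tau,1]$, $u=r_1\tau+r_2$ and $v=s_1\tau+s_2$ is permissible exactly under the hypotheses of (ii): $u,v\notin[\tau,1]$ because $\mathbf r,\mathbf s\notin\mathbb Z^{2}$, and $u\pm v\notin[\tau,1]$ because $\mathbf r\not\equiv\pm\mathbf s\pmod{\mathbb Z^{2}}$. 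Multiplying through by $-2^{7}3^{5}g_2(\tau)g_3(\tau)/\Delta(\tau)$ then gives
\[
f_{\mathbf r}(\tau)-f_{\mathbf s}(\tau)=2^{7}3^{5}\,\frac{g_2(\tau)g_3(\tau)}{\Delta(\tau)}\cdot\frac{\sigma(u+v;[\tau,1])\,\sigma(u-v;[\tau,1])}{\sigma(u;[\tau,1])^{2}\,\sigma(v;[\tau,1])^{2}}.
\]

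Next I would pass to Klein forms $\mathfrak k_{\mathbf r}(\tau)=e^{-\eta_{[\tau,1]}(z)z/2}\,\sigma(z;[\tau,1])$ with $z=r_1\tau+r_2$, where $\eta_{L}(\cdot)$ denotes the ($\mathbb R$-linear) quasi-period map of $L$, not to be confused with the Dedekind $\eta$. Because $\eta_{[\tau,1]}$ is $\mathbb R$-linear and $z$ is linear in $(r_1,r_2)$, a short expansion shows that the exponential prefactors attached to $u+v$, $u-v$, $u$ (twice) and $v$ (twice) cancel in the $\sigma$-quotient, so it equals $\mathfrak k_{\mathbf r+\mathbf s}(\tau)\mathfrak k_{\mathbf r-\mathbf s}(\tau)/(\mathfrak k_{\mathbf r}(\tau)^{2}\mathfrak k_{\mathbf s}(\tau)^{2})$. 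Now I would use the standard relation $g_{\mathbf r}(\tau)=-\mathfrak k_{\mathbf r}(\tau)\eta(\tau)^{2}$ (Kubert--Lang), consistent with the normalization $\eta(\tau)^{24}=\Delta(\tau)$ fixed in the statement: substituting $\mathfrak k=-g/\eta^{2}$, the signs cancel and a net factor $\eta(\tau)^{4}$ remains, giving
\[
\frac{\sigma(u+v)\,\sigma(u-v)}{\sigma(u)^{2}\,\sigma(v)^{2}}=\eta(\tau)^{4}\cdot\frac{g_{\mathbf r+\mathbf s}(\tau)\,g_{\mathbf r-\mathbf s}(\tau)}{g_{\mathbf r}(\tau)^{2}\,g_{\mathbf s}(\tau)^{2}}.
\]
Plugging this into the previous display is precisely the formula in (ii).

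For (i): if $\mathbf r\equiv\pm\mathbf s\pmod{\mathbb Z^{2}}$ then $r_1\tau+r_2\equiv\pm(s_1\tau+s_2)\pmod{[\tau,1]}$, so $\wp(r_1\tau+r_2;[\tau,1])=\wp(s_1\tau+s_2;[\tau,1])$ by the evenness and $[\tau,1]$-periodicity of $\wp$, whence $f_{\mathbf r}=f_{\mathbf s}$ (this also appears in Proposition \ref{properties of Fricke}(i)). Conversely, if $\mathbf r\not\equiv\pm\mathbf s\pmod{\mathbb Z^{2}}$, the formula just proved exhibits $f_{\mathbf r}-f_{\mathbf s}$ as $2^{7}3^{5}g_2g_3\eta^{4}/\Delta$ times a quotient of Siegel functions; since Siegel functions and $\eta$ have no zeros on $\mathbb H$ (being modular units) and $g_2g_3$ is not identically zero, $f_{\mathbf r}-f_{\mathbf s}\not\equiv0$, so $f_{\mathbf r}\neq f_{\mathbf s}$.

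The two classical inputs---the $\sigma$-factorization of $\wp(u)-\wp(v)$ and the Siegel--Klein relation---I would simply cite. The only delicate point is the constant bookkeeping: pinning down the exact sign and the exact power of $\eta(\tau)$ in $g_{\mathbf r}=-\mathfrak k_{\mathbf r}\eta(\tau)^{2}$, and checking that the quasi-period exponential factors (along with any roots of unity hidden in the normalization of the Klein form) really do cancel in $\sigma(u+v)\sigma(u-v)/(\sigma(u)^{2}\sigma(v)^{2})$. Once those are settled, the rest is a routine substitution.
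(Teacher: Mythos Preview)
Your argument is correct and is precisely the classical route: the paper does not give its own proof at all but simply cites \cite[Lemma 10.4]{Cox} and \cite[p.~51]{Kubert}, and what you have written is exactly the computation those references contain (the $\sigma$-factorization of $\wp(u)-\wp(v)$, passage to Klein forms via cancellation of the quasi-period exponentials, and the relation $g_{\mathbf r}=\mathfrak k_{\mathbf r}\,\eta^{2}$ up to sign). Your caveat about the constant bookkeeping is well placed but harmless here, since the sign in $g_{\mathbf r}=\pm\mathfrak k_{\mathbf r}\,\eta^{2}$ cancels in the quotient $g_{\mathbf r+\mathbf s}g_{\mathbf r-\mathbf s}/(g_{\mathbf r}^{2}g_{\mathbf s}^{2})$ regardless of convention.
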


\begin{proof}
See \cite[Lemma 10.4]{Cox}, \cite[p.51]{Kubert}.

\end{proof}

\begin{remark}\label{Fricke modular unit}
Let $N~(>1)$ be an integer and $\mathbf{r},\mathbf{s}, \mathbf{r}',\mathbf{s}'\in(1/N)\mathbb{Z}^2\setminus\mathbb{Z}^2$ such that 
$\mathbf{r}\not\equiv \pm\mathbf{s}\pmod{\mathbb{Z}^2}$ and $\mathbf{r}'\not\equiv \pm\mathbf{s}'\pmod{\mathbb{Z}^2}$.
Then the function
\begin{equation*}
\frac{f_\mathbf{r}(\tau)-f_\mathbf{s}(\tau)}{f_{\mathbf{r}'}(\tau)-f_{\mathbf{s}'}(\tau)}=
\frac{g_{\mathbf{r}+\mathbf{s}}(\tau) g_{\mathbf{r}-\mathbf{s}}(\tau)}{g_{\mathbf{r}}(\tau)^{2}g_{\mathbf{s}}(\tau)^{2}}
\cdot
\frac{g_{\mathbf{r}'}(\tau)^{2}g_{\mathbf{s}'}(\tau)^{2}}{g_{\mathbf{r}'+\mathbf{s}'}(\tau) g_{\mathbf{r}'-\mathbf{s}'}(\tau)}
\end{equation*}
becomes a modular unit in $\F_N$ by Proposition \ref{property of Siegel functions} and \ref{Fricke and Siegel} (ii), which is called a \textit{Weierstrass unit} of level $N$.
\end{remark}

\section{Ring of weakly holomorphic modular functions for $\G_1(N)$}

For a positive integer $N$, let
\begin{eqnarray*}
X_1(N)&=&\G_1(N)\setminus\mathbb{H}^*\\
X^1(N)&=&\G^1(N)\setminus\mathbb{H}^*,
\end{eqnarray*}
where $\mathbb{H}^*=\mathbb{H}\cup \mathbb{Q}\cup\{i\infty\}$.
Observe that $\G^1(N)$ and $\G_1(N)$ are conjugate in $\mathrm{SL}_2(\mathbb{R})$, that is,
\begin{equation}\label{conjugate}
\o_N \G^1(N) \o_N^{-1}= \G_1(N)
\end{equation}
where $\o_N=\left[\begin{matrix}1/\sqrt{N}&0\\ 0& \sqrt{N} \end{matrix}\right]$.
Hence one can readily show that the map
\begin{equation}\label{modular curve}
\begin{array}{ccc}
X_1(N)&\longrightarrow&X^1(N)\vspace{0.1cm}\\
z&\longmapsto&\o_N^{-1}(z)=Nz
\end{array}
\end{equation}
is a well-defined isomorphism between two modular curves.
Now, let $\mathcal{F}_{1,N}(\mathbb{Q})$ (resp. $\mathcal{F}^1_{N}(\mathbb{Q})$) be the field of meromorphic modular functions for $\G_1(N)$ (resp. $\G^1(N)$) with rational Fourier coefficients.
It then follows from (\ref{conjugate}) that the map 
\begin{equation}\label{field isomorphism}
\begin{array}{ccc}
\mathcal{F}_{1,N}(\mathbb{Q})&\longrightarrow&\mathcal{F}^1_{N}(\mathbb{Q})\vspace{0.1cm}\\
h(\tau)&\longmapsto&(h\circ \o_N)(\tau)=h(\tau/N)
\end{array}
\end{equation}
is an isomorphism.
Furthermore the map (\ref{field isomorphism}) gives rise to a ring isomorphism
\begin{equation}\label{ring isomorphism}
\O_{1,N}(\mathbb{Q})\xrightarrow{~\sim~} \O^1_N(\mathbb{Q}).
\end{equation}

\begin{proposition}\label{genus zero}
The genus of $X_1(N)$ is zero if and only if $1\leq N\leq 10$ or $N=12$.
In this case, two sets of inequivalent cusps of $X_1(N)$ and $X^1(N)$ are as follows:
\begin{center}
\begin{tabular}{|c|c|c|}
\cline{1-3} 
$N$ & \textup{inequivalent cusps of $X_1(N)$}&\textup{inequivalent cusps of $X^1(N)$}\\
\cline{1-3}
$1$& $\{ i\infty  \}$ & $\{ i\infty  \}$\\
$2$& $\{0, i\infty  \}$ & $\{0, i\infty  \}$\\
$3$& $\{0, i\infty \}$ & $\{0, i\infty  \}$\\
$4$& $\{0,\frac{1}{2}, i\infty \}$& $\{0,{2}, i\infty \}$\\
$5$& $\{0, \frac{1}{2}, \frac{2}{5}, i\infty \}$ & $\{0, \frac{5}{2}, {2}, i\infty \}$\\
$6$& $\{0, \frac{1}{2}, \frac{1}{3},  i\infty \}$ & $\{0, 3, 2,  i\infty \}$\\
$7$& $\{0, \frac{1}{2}, \frac{1}{3}, \frac{2}{7}, \frac{3}{7}, i\infty  \}$ & $\{0, \frac{7}{2}, \frac{7}{3}, 2, 3, i\infty  \}$\\
$8$& $\{0, \frac{1}{2}, \frac{1}{3}, \frac{1}{4},  \frac{3}{8},  i\infty\}$& $\{0, 4, \frac{8}{3}, 2,  3,  i\infty\}$\\
$9$& $\{0, \frac{1}{2}, \frac{1}{3}, \frac{2}{3}, \frac{1}{4}, \frac{2}{9}, \frac{4}{9}, i\infty \}$ &
$\{0, \frac{9}{2}, 3, 6, \frac{9}{4}, 2, 4, i\infty \}$\\
$10$& $\{0, \frac{1}{2}, \frac{1}{3}, \frac{1}{4}, \frac{1}{5}, \frac{2}{5}, \frac{3}{10}, i\infty \}$ &
$\{0, 5, \frac{10}{3}, \frac{5}{2}, 2, 4, 3, i\infty \}$\\
$12$& $\{0, \frac{1}{2}, \frac{1}{3}, \frac{2}{3}, \frac{1}{4}, \frac{3}{4}, \frac{1}{5}, \frac{1}{6}, \frac{5}{12}, i\infty \}$ & $\{0, 6, 4, 8, 3, 9, \frac{12}{5}, 2, 5, i\infty \}$\\
\cline{1-3}
\end{tabular}
\end{center}
\end{proposition}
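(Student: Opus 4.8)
The plan is to prove the genus formula and the cusp list by two independent arguments. For the genus statement, I would invoke the standard genus formula for $X_1(N)$, namely $g(X_1(N)) = 1 + \tfrac{\mu}{24} - \tfrac{\varepsilon_2}{4} - \tfrac{\varepsilon_3}{3} - \tfrac{\varepsilon_\infty}{2}$ adapted to $\G_1(N)$ (for $N \geq 5$ there are no elliptic points, so $g = 1 + \mu/24 - \varepsilon_\infty/2$), where $\mu = [\mathrm{SL}_2(\mathbb{Z}) : \pm\G_1(N)]$ and $\varepsilon_\infty$ is the number of cusps. Since $\mu$ grows like $N^2 \prod_{p|N}(1 - p^{-2})$ while the number of cusps grows only linearly, it is a finite check that $g = 0$ exactly for $N \in \{1,\dots,10,12\}$; this is a classical fact and I would simply cite it (e.g. Diamond–Shurman) rather than re-derive it. The genus of $X^1(N)$ equals that of $X_1(N)$ by the isomorphism (\ref{modular curve}), so nothing new is needed there.

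For the cusp lists, the key point is that the isomorphism (\ref{modular curve}) sends the cusp represented by $z \in \mathbb{Q}\cup\{i\infty\}$ on $X_1(N)$ to $Nz$ on $X^1(N)$; thus once the left-hand column is established, the right-hand column is obtained by multiplying each entry by $N$ (with $i\infty \mapsto i\infty$ and $0\mapsto 0$). So the real content is: enumerate the $\G_1(N)$-inequivalence classes of cusps. I would use the standard classification: a cusp $a/c$ (in lowest terms, with the convention $i\infty = 1/0$) is determined up to $\G_1(N)$-equivalence by the pair $\pm(a \bmod \gcd(c,N),\ c \bmod N)$ subject to the usual compatibility, and two pairs $(a_1,c_1)$, $(a_2,c_2)$ with $c_i \mid N$ give equivalent cusps iff $c_1 = c_2$ and $a_1 \equiv a_2 \pmod{\gcd(c_1, N/c_1)}$, up to the overall sign $\pm I_2$. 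For each $N$ in the list I would run through the divisors $c$ of $N$, count the residues $a$, and write down a representative; matching these counts against the value $\varepsilon_\infty$ forced by $g = 0$ in the genus formula provides an internal consistency check.

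The main obstacle — really the only nontrivial bookkeeping — is getting the cusp representatives and their count exactly right, especially the role of the $\pm$ identification coming from $-I_2 \in \G_1(N)$ only for $N \leq 2$, and more subtly for $N=4$ where $-I_2 \notin \G_1(4)$ but the cusps $1/2$ and $-1/2 = 3/2$ still coincide because $1 \equiv 3 \pmod{\gcd(2,2)}$ is false yet $1 \equiv -3 \pmod 4$ forces them together via an element of $\G_1(4)$ after adjusting by a translation. The cleanest way to avoid sign errors is to use the explicit description of $\G_1(N)$-cusps via the double coset $\G_1(N)\backslash \mathrm{SL}_2(\mathbb{Z}) / \G_\infty$ and compute, for each $N$, which columns $\pm\binom{a}{c}$ modulo $N$ are achieved and which are identified; this is a finite, mechanical verification. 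I would carry it out divisor-by-divisor for each of the eleven values of $N$, present the resulting representatives in the table, and note that the totals agree with the genus-zero constraint, which simultaneously confirms the genus claim and the completeness of each cusp list.
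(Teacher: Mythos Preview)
Your proposal is correct and takes essentially the same approach as the paper: the paper's proof is a one-line citation (``It is immediate from \cite{Harada} and (\ref{modular curve})''), relying on a standard reference for the genus-zero classification and cusp list of $X_1(N)$ and then using the isomorphism $z\mapsto Nz$ to transport cusps to $X^1(N)$, which is exactly the structure you outline. Your write-up simply unpacks what such a reference would contain (the genus formula and the $\Gamma_1(N)$-cusp equivalence criterion), so there is no substantive difference.
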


\begin{proof}
It is immediate from \cite{Harada} and (\ref{modular curve}).
\end{proof}
From now on, we assume that $2\leq N\leq 10$ or $N=12$.
Let $g_{1,N}(\tau)$ be the generator of the function field $\mathbb{C}(X_1(N))$ in the table (\cite[Table 2]{K-S}):\\
\begin{center}
\begin{tabular}{|c|c|}
\cline{1-2} 
$N$ & $g_{1,N}(\tau)$\\
\cline{1-2}
2& $g_{\left[\begin{smallmatrix}1/2\\ 0\end{smallmatrix}\right]}(2\tau)^{12}$\\
3& $g_{\left[\begin{smallmatrix}1/3\\ 0\end{smallmatrix}\right]}(3\tau)^{12}$\\
4& $g_{\left[\begin{smallmatrix}2/4\\ 0\end{smallmatrix}\right]}(4\tau)^{8} g_{\left[\begin{smallmatrix}1/4\\ 0\end{smallmatrix}\right]}(4\tau)^{-8}$\\
5& $g_{\left[\begin{smallmatrix}2/5\\ 0\end{smallmatrix}\right]}(5\tau)^{5} g_{\left[\begin{smallmatrix}1/5\\ 0\end{smallmatrix}\right]}(5\tau)^{-5}$\\
6& $g_{\left[\begin{smallmatrix}3/6\\ 0\end{smallmatrix}\right]}(6\tau)^{3} g_{\left[\begin{smallmatrix}1/6\\ 0\end{smallmatrix}\right]}(6\tau)^{-3}$\\
7& $g_{\left[\begin{smallmatrix}2/7\\ 0\end{smallmatrix}\right]}(7\tau)^{2}g_{\left[\begin{smallmatrix}3/7\\ 0\end{smallmatrix}\right]}(7\tau) g_{\left[\begin{smallmatrix}1/7\\ 0\end{smallmatrix}\right]}(7\tau)^{-3}$\\
8& $g_{\left[\begin{smallmatrix}3/8\\ 0\end{smallmatrix}\right]}(8\tau)^{2} g_{\left[\begin{smallmatrix}1/8\\ 0\end{smallmatrix}\right]}(8\tau)^{-2}$\\
9& $g_{\left[\begin{smallmatrix}2/9\\ 0\end{smallmatrix}\right]}(9\tau) g_{\left[\begin{smallmatrix}4/9\\ 0\end{smallmatrix}\right]}(9\tau) g_{\left[\begin{smallmatrix}1/9\\ 0\end{smallmatrix}\right]}(9\tau)^{-2}$\\
10& $g_{\left[\begin{smallmatrix}3/10\\ 0\end{smallmatrix}\right]}(10\tau) g_{\left[\begin{smallmatrix}4/10\\ 0\end{smallmatrix}\right]}(10\tau) g_{\left[\begin{smallmatrix}1/10\\ 0\end{smallmatrix}\right]}(10\tau)^{-1} g_{\left[\begin{smallmatrix}2/10\\ 0\end{smallmatrix}\right]}(10\tau)^{-1}$\\
12& $g_{\left[\begin{smallmatrix}5/12\\ 0\end{smallmatrix}\right]}(12\tau) g_{\left[\begin{smallmatrix}1/12\\ 0\end{smallmatrix}\right]}(12\tau)^{-1}$\\
\cline{1-2}
\end{tabular}
\end{center}
And, let $g^1_N(\tau)=g_{1,N}(\tau/N)$ be the function on the modular curve $X^1(N)$ induced from $g_{1,N}(\tau)$.
Observe that $g_{1,N}(\tau)$ has rational Fourier coefficients, and so it belongs to $\mathcal{F}_{1,N}(\mathbb{Q})$ (\cite[Theorem 6.7]{K-S}).

\begin{lemma}\label{field generation}
Let $N$ be as above.
Then we have
\begin{eqnarray*}
\mathcal{F}_{1,N}(\mathbb{Q})&=&\mathbb{Q}(g_{1,N}(\tau)),\\
\mathcal{F}^1_N(\mathbb{Q})&=&\mathbb{Q}(g^1_N(\tau)).
\end{eqnarray*}
\end{lemma}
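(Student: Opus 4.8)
\section*{Proof proposal for Lemma \ref{field generation}}

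The plan is to reduce everything to the classical fact (Proposition \ref{genus zero}) that $X_1(N)$ has genus zero for the $N$ in question, so that $\mathbb{C}(X_1(N))$ is a rational function field in one variable, and then to pin down a rational-over-$\mathbb{Q}$ generator. First I would recall that $g_{1,N}(\tau)$ is, by construction, a generator of $\mathbb{C}(X_1(N))$ over $\mathbb{C}$; this is exactly the content of the table taken from \cite[Table 2]{K-S}, so I may cite it. Consequently $[\mathbb{C}(X_1(N)):\mathbb{C}(g_{1,N}(\tau))]=1$. The task is to upgrade this from $\mathbb{C}$-coefficients to $\mathbb{Q}$-coefficients, i.e.\ to show $\mathcal{F}_{1,N}(\mathbb{Q})=\mathbb{Q}(g_{1,N}(\tau))$ inside $\mathcal{F}_{1,N}(\mathbb{Q})$.

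For the inclusion $\mathbb{Q}(g_{1,N}(\tau))\subseteq\mathcal{F}_{1,N}(\mathbb{Q})$ I would invoke the already-noted fact (from \cite[Theorem 6.7]{K-S}, stated in the excerpt) that $g_{1,N}(\tau)$ has rational Fourier coefficients and lies in $\mathcal{F}_{1,N}(\mathbb{Q})$; the latter is a field, so it contains $\mathbb{Q}(g_{1,N}(\tau))$. For the reverse inclusion, let $h\in\mathcal{F}_{1,N}(\mathbb{Q})$. Since $h\in\mathbb{C}(X_1(N))=\mathbb{C}(g_{1,N}(\tau))$, we may write $h=P(g_{1,N})/Q(g_{1,N})$ with $P,Q\in\mathbb{C}[x]$ coprime; I would show the coefficients of $P,Q$ can be taken in $\mathbb{Q}$. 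The clean way is a Galois-descent argument: an automorphism $\sigma\in\mathrm{Gal}(\overline{\mathbb{Q}}/\mathbb{Q})$ (or more concretely $\sigma\in\mathrm{Gal}(\mathbb{Q}(\zeta_N)/\mathbb{Q})$, acting on Fourier coefficients as in Proposition \ref{modular function action}(i) and the remark following it) fixes $h$ because $h$ has rational coefficients, and it sends $g_{1,N}$ to another generator of $\mathbb{C}(X_1(N))$ over the fixed field; comparing leading behaviour at a chosen cusp one checks that $\sigma$ actually fixes $g_{1,N}$ as well (since $g_{1,N}$ has rational $q$-expansion), hence $\sigma$ fixes the rational expression, forcing $P^\sigma/Q^\sigma=P/Q$ and, by coprimality and normalisation of $Q$ to be monic, $P^\sigma=P$, $Q^\sigma=Q$. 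Thus $P,Q\in\mathbb{Q}[x]$ and $h\in\mathbb{Q}(g_{1,N}(\tau))$. The second identity $\mathcal{F}^1_N(\mathbb{Q})=\mathbb{Q}(g^1_N(\tau))$ then follows formally by transporting along the ring/field isomorphism (\ref{field isomorphism}), under which $g_{1,N}(\tau)\mapsto g_{1,N}(\tau/N)=g^1_N(\tau)$, so no new work is needed.

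The main obstacle I anticipate is making the descent step fully rigorous: one must be sure that the only $\mathbb{C}$-generators of a genus-zero function field that have \emph{rational} $q$-expansions at $i\infty$ and are fixed (as functions on the curve) by $\mathrm{Gal}(\mathbb{Q}(\zeta_N)/\mathbb{Q})$ are the Möbius transforms over $\mathbb{Q}$ of $g_{1,N}$, and that the particular $g_{1,N}$ in the table is genuinely rational (not merely that its field of definition is abelian). The rationality of the $q$-coefficients is exactly \cite[Theorem 6.7]{K-S}, which I am entitled to assume; given that, the descent is the standard statement that for a field $k$ and $k(t)\subseteq L\subseteq \overline{k}(t)$ with $L\cap\overline{k}=k$ one has $L=k(t)$, combined with the observation that Fourier expansions give an embedding compatible with the Galois action, so the argument is short once set up correctly. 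An alternative, perhaps cleaner, route is to avoid Galois descent altogether: take $h\in\mathcal{F}_{1,N}(\mathbb{Q})$ of minimal ``pole order'' and subtract a $\mathbb{Q}$-polynomial in $g_{1,N}$ (using that the $q$-orders of $g_{1,N}^k$ at $i\infty$ realise all sufficiently large integers, since $g_{1,N}$ has a pole or zero of known order there) to reduce the pole order, then induct — this only uses that $X_1(N)$ has a single cusp at which one controls $q$-orders, plus genus zero to handle the base case. I would present whichever of these is shorter in the final write-up, but the Galois-descent version is the more conceptual and generalises better.
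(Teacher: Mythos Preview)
Your proposal is correct and follows essentially the same route as the paper: both use that $g_{1,N}(\tau)$ generates $\mathbb{C}(X_1(N))$ and has rational Fourier coefficients, then descend from $\mathbb{C}$-coefficients to $\mathbb{Q}$-coefficients, and finally transport via the isomorphism~(\ref{field isomorphism}) to obtain the statement for $\mathcal{F}^1_N(\mathbb{Q})$. The only difference is that the paper outsources the descent step to \cite[Lemma~4.1]{K-S} (the statement that $\mathbb{C}(X_1(N))=\mathbb{C}(S)$ with $S\subset\mathcal{F}_{1,N}(\mathbb{Q})$ implies $\mathcal{F}_{1,N}(\mathbb{Q})=\mathbb{Q}(S)$), whereas you sketch its proof inline; note that your Galois-descent variant as written does not immediately handle transcendental coefficients of $P,Q$, so in a final write-up you should use the linear-algebra/$q$-expansion argument you mention (or the general field-theoretic statement), which is presumably what \cite[Lemma~4.1]{K-S} does.
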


\begin{proof}
If $\mathbb{C}(X_1(N))=\mathbb{C}(S)$ for some subset $S\subset \mathcal{F}_{1,N}(\mathbb{Q})$, then $\mathcal{F}_{1,N}(\mathbb{Q})=\mathbb{Q}(S)$ (\cite[Lemma 4.1]{K-S}).
Therefore, the lemma follows from (\ref{field isomorphism}).

\end{proof}
We shall first find generators of $\mathcal{O}^1_N(\mathbb{Q})$ in the above case.
Since the genus of $X^1(N)$ is zero by Proposition \ref{genus zero} and (\ref{modular curve}),  the map
\begin{equation}\label{surface isomorphism}
\begin{array}{ccc}
X^1(N)&\longrightarrow& \mathbb{P}^1(\mathbb{C})\\
\tau&\longmapsto&[g^1_N(\tau):1]
\end{array}
\end{equation}
turns out to be an isomorphism between two compact Riemann surfaces (\cite[Theorem 6.5]{K-S}).
Note that $g^1_N(i\infty)=\infty$ (\cite[Table 3]{K-S}).
Let $\tau_0$ be an inequivalent cusp of $X^1(N)$ other than $i\infty$.
Then there exists $\gamma\in\mathrm{SL}_2(\mathbb{Z})$ such that $\tau_0=\gamma(i\infty)$ and we attain
\begin{equation*}
g^1_N(\tau_0)=\lim_{\tau\rightarrow i\infty}(g^1_N\circ \gamma)(\tau).
\end{equation*}
Since the function $g^1_N(\tau)$ satisfies the assumption of Proposition \ref{property of Siegel functions} (i),
one can estimate the value $g^1_N(\tau_0)$ in a concrete way.
For example, if $N=5$ and $\tau_0=5/2$, then we derive 
\begin{eqnarray*}
g^1_5(\tau_0)=\lim_{\tau\rightarrow i\infty}(g^1_5\circ \left[\begin{smallmatrix}5&2\\ 2&1\end{smallmatrix}\right])(\tau)
&=&\lim_{\tau\rightarrow i\infty}\frac{g_{\left[\begin{smallmatrix}2\\ 4/5\end{smallmatrix}\right]}(\tau)^{5}}{g_{\left[\begin{smallmatrix}1\\ 2/5\end{smallmatrix}\right]}(\tau)^{5}}\quad \textrm{by Proposition \ref{property of Siegel functions} (i)}\\
&=&\lim_{\tau\rightarrow i\infty}\left(\frac{q^{13/12}\Z_5^2(1-q^{-1}\Z_5^{-4})(1-\Z_5^{-4})}{q^{1/12}(1-\Z_5^{-2})}\right)^5\quad\textrm{by (\ref{Siegel expansion})}\\
&=&-2-10(\Z_5+\Z_5^{-1})-5(\Z_5^2+\Z_5^{-2}).
\end{eqnarray*}
Now, let 
\begin{equation*}
C_N=\{g^1_N(\tau_0)~|~\tau_0\in\{\textrm{inequivalent cusps of $\G^1(N)$ except $i\infty$}\} \}.
\end{equation*}
Then one can readily get the following table:
\begin{center}
\begin{tabular}{|c|c|}
\cline{1-2} 
$N$ &  $C_N$\\
\cline{1-2}
2& $\{0\}$\\
\cline{1-2}
3& $\{0\}$ \\
\cline{1-2}
4& $\{16,0\}$\\
\cline{1-2}
5& $\{-2-5(\Z_5+\Z_5^{-1})-10(\Z_5^2+\Z_5^{-2}), -2-10(\Z_5+\Z_5^{-1})-5(\Z_5^2+\Z_5^{-2}), 0\}$\\
\cline{1-2}
6& $\{8, -1, 0\}$  \\
\cline{1-2}
7& $\{4+3(\Z_7+\Z_7^{-1})+\Z_7^2+\Z_7^{-2}, 4+\Z_7+\Z_7^{-1}+3(\Z_7^3+\Z_7^{-3}), 4+3(\Z_7^2+\Z_7^{-2})+\Z_7^3+\Z_7^{-3}, 1, 0   \}$\\
\cline{1-2}
8& $\{3+2(\Z_8+\Z_8^{-1}), -1, 3-2(\Z_8+\Z_8^{-1}), 1, 0 \}$\\
\cline{1-2}
9& $\{2+2(\Z_9+\Z_9^{-1})+\Z_9^2+\Z_9^{-2}, 2+\Z_9+\Z_9^{-1}+2(\Z_9^4+\Z_9^{-4}), -\Z_3, -\Z_3^2, $\\
&$2+2(\Z_9^2+\Z_9^{-2})+\Z_9^4+\Z_9^{-4},1,0   \}$ \\
\cline{1-2}
10& $\{2+\Z_{10}+\Z_{10}^{-1}+\Z_5+\Z_5^{-1}, \Z_5^{2}+\Z_5^{-2}, 2+\Z_{10}^3+\Z_{10}^{-3}+\Z_5^2+\Z_5^{-2}, \Z_5+\Z_5^{-1}, 1, -1, 0 \}$   \\
\cline{1-2}
12& $\{1+\Z_{12}+\Z_{12}^{-1}+\Z_{6}+\Z_{6}^{-1}, -1, -i, i, -\Z_3, -\Z_3^2, 1-(\Z_{12}+\Z_{12}^{-1})-(\Z_{3}+\Z_{3}^{-1}), 1, 0    \}$\\
\cline{1-2}
\end{tabular}
\end{center}
The ordering in the set $C_N$ corresponds to that of the set of inequivalent cusps of $X^1(N)$ in Proposition \ref{genus zero}.
Note that $C_N\subset \mathbb{Q}_\mathrm{ab}$ by (\ref{Siegel expansion}), where $\mathbb{Q}_\mathrm{ab}$ is the maximal abelian extension of $\mathbb{Q}$.

\begin{lemma}\label{conjugate of c}
Let $c\in C_N$ and $\sigma\in\mathrm{Gal}(\mathbb{Q}_\mathrm{ab}/\mathbb{Q})$.
Then $c^\sigma\in C_N$.
\end{lemma}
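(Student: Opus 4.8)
The plan is to show that the finite set $C_N$ is stable under the Galois action of $\mathrm{Gal}(\mathbb{Q}_{\mathrm{ab}}/\mathbb{Q})$ by realizing each element of $C_N$ as the value at $i\infty$ of a suitable $\mathrm{SL}_2(\mathbb{Z})$-translate of $g^1_N$, and then tracking how the coefficient-wise Galois action interacts with such limits. Concretely, fix $c=g^1_N(\tau_0)\in C_N$ with $\tau_0=\gamma(i\infty)$ for some $\gamma\in\mathrm{SL}_2(\mathbb{Z})$, so that $c=\lim_{\tau\to i\infty}(g^1_N\circ\gamma)(\tau)$. The key observation is that $g^1_N\circ\gamma$ is again a (meromorphic) modular function lying in $\F_N$, being a product of Siegel functions $g_{{}^t\gamma\,\mathbf{r}}(\tau)$ up to a root of unity by Proposition \ref{property of Siegel functions}(i) applied to the defining product for $g_{1,N}$ (recall $g^1_N(\tau)=g_{1,N}(\tau/N)$, and the exponents in the table sum to a multiple of $12$). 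Its $q$-expansion has coefficients in $\mathbb{Q}(\zeta_N)\subset\mathbb{Q}_{\mathrm{ab}}$, and the constant term of that $q$-expansion is exactly $c$.

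Next I would invoke Proposition \ref{modular function action}(i) (and the remark following Proposition \ref{modular function action} extending it to $\sigma\in\mathrm{Gal}(\mathbb{Q}_{\mathrm{ab}}/\mathbb{Q})$): applying $\sigma$ to the Fourier coefficients of $g^1_N\circ\gamma$ sends it to $(g^1_N\circ\gamma)^\sigma$, and since taking the constant term commutes with applying $\sigma$ to each coefficient, we get $(g^1_N\circ\gamma)^\sigma(i\infty)=c^\sigma$. Now $\sigma$ restricts to an element of $\mathrm{Gal}(\F_N/\F_1)$ which, by Proposition \ref{modular function action}, is represented by some $\left[\begin{smallmatrix}1&0\\0&d\end{smallmatrix}\right]\in G_N$; more usefully, I will use that $g^1_N$ itself has \emph{rational} Fourier coefficients (it lies in $\F^1_N(\mathbb{Q})$ by Lemma \ref{field generation} and the remark before it), hence $(g^1_N)^\sigma=g^1_N$. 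The point is then to commute $\sigma$ past the composition with $\gamma$: one has $(g^1_N\circ\gamma)^\sigma = (g^1_N)^{\sigma}\circ\gamma' $ for an appropriate $\gamma'\in\mathrm{SL}_2(\mathbb{Z})$ — this is where one uses that the $G_N$-action and the $\mathrm{SL}_2$-action combine inside $\mathrm{GL}_2(\mathbb{Z}/N\mathbb{Z})$, so conjugating the diagonal element $\left[\begin{smallmatrix}1&0\\0&d\end{smallmatrix}\right]$ by $\gamma$ produces another element whose $\mathrm{SL}_2$-part is some $\gamma'$ and whose diagonal part acts trivially on $g^1_N$ by rationality. Since $g^1_N=(g^1_N)^\sigma$ has rational coefficients, $(g^1_N\circ\gamma)^\sigma=g^1_N\circ\gamma'$, and therefore $c^\sigma=(g^1_N\circ\gamma')(i\infty)=g^1_N(\gamma'(i\infty))$, a value of $g^1_N$ at the cusp $\gamma'(i\infty)$. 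Because $g^1_N$ induces an isomorphism $X^1(N)\xrightarrow{\sim}\mathbb{P}^1(\mathbb{C})$ by \eqref{surface isomorphism}, its values at the finitely many cusps of $X^1(N)$ are precisely $C_N\cup\{\infty\}$; and since $\gamma'(i\infty)$ is a cusp and $c^\sigma$ is finite (it is a value of the unit-like product, or simply because $c$ is finite and $\sigma$ fixes $\infty$), we conclude $c^\sigma\in C_N$.

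The main obstacle I anticipate is the bookkeeping in the step "$(g^1_N\circ\gamma)^\sigma = g^1_N\circ\gamma'$": one must be careful that the coefficient-wise action of $\sigma$ corresponds, under Proposition \ref{modular function action}, to a \emph{specific} automorphism of $\F_N$, and that conjugating by $\gamma$ inside $\mathrm{GL}_2(\mathbb{Z}/N\mathbb{Z})/\{\pm I_2\}$ genuinely lands the result back as a composition $g^1_N\circ\gamma'$ rather than something only lying in $\F_N$. Rationality of $g^1_N$ is what makes this clean: any element of $G_N$ acts trivially on it, so only the $\mathrm{SL}_2$-part matters, and one can take $\gamma'$ to be a lift of the image of $\gamma$ under conjugation composed with the appropriate class. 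I would then simply remark that, as a sanity check, the explicit tables for $C_N$ show each listed set is visibly closed under $\mathrm{Gal}(\mathbb{Q}(\zeta_N)/\mathbb{Q})$ (e.g. the two nonzero non-trivial values for $N=5$ are swapped by $\zeta_5\mapsto\zeta_5^2$, the conjugate pairs $3\pm2(\zeta_8+\zeta_8^{-1})$ for $N=8$, etc.), which both confirms the statement and illustrates the mechanism.
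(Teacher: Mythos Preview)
Your proposal is correct and follows essentially the same route as the paper's proof: write $c=\lim_{\tau\to i\infty}(g^1_N\circ\gamma)(\tau)$, apply $\sigma$ coefficientwise to obtain $c^\sigma=\lim_{\tau\to i\infty}(g^1_N\circ\gamma)^\sigma(\tau)$, and then use that $(g^1_N\circ\gamma)^\sigma$ is a Galois conjugate of $g^1_N$ over $\F_1$ together with the $G_N$-invariance of $g^1_N$ to rewrite it as $g^1_N\circ\gamma'$ for some $\gamma'\in\mathrm{SL}_2(\mathbb{Z})$. The paper's version is terser---it simply invokes the decomposition $\mathrm{GL}_2(\mathbb{Z}/N\mathbb{Z})=G_N\cdot\mathrm{SL}_2(\mathbb{Z}/N\mathbb{Z})$ and the $G_N$-invariance of $g^1_N$ rather than spelling out the ``bookkeeping'' you flag (which is indeed just this decomposition, not a conjugation)---but the substance is the same.
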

\begin{proof}
Since $\mathcal{F}_1\subset\mathcal{F}^1_N(\mathbb{Q})\subset \mathcal{F}_N$, it follow from Proposition \ref{modular function action} that $\mathcal{F}_N$ is a Galois extension of $\mathcal{F}^1_N(\mathbb{Q})$ whose Galois group is
\begin{equation}\label{Galois group}
\mathrm{Gal}(\F_N/\F^1_N(\mathbb{Q}))\cong G_N\cdot \left\{\gamma \in \mathrm{SL}_{2}(\mathbb{Z}/N\mathbb{Z})/\{\pm I_2\}~\big|~
\gamma
\equiv 
\left[\begin{matrix}
1&0\\
*&1
\end{matrix}\right]
(\bmod{N}) \right\}.
\end{equation}
Observe that $c=\displaystyle\lim_{\tau\rightarrow i\infty}(g^1_N\circ \gamma)(\tau)$
for some $\gamma\in\mathrm{SL}_2(\mathbb{Z})$, and hence we achieve by Proposition \ref{property of Siegel functions} (i) and (\ref{Siegel expansion})   
\begin{equation*}
c^\sigma=\left(\lim_{\tau\rightarrow i\infty}(g^1_N\circ \gamma)(\tau)\right)^\sigma=
\lim_{\tau\rightarrow i\infty}(g^1_N\circ \gamma)^\sigma(\tau).
\end{equation*}
Since $(g^1_N\circ \gamma)^\sigma$ is a conjugate of $g^1_N$ over $\F_1$ and $g^1_N$ is $G_N$-invariant by (\ref{Galois group}), we see from Proposition \ref{modular function action} that
\begin{equation*}
(g^1_N\circ \gamma)^\sigma=g^1_N\circ \gamma'
\end{equation*}
  for some 
$\gamma'\in\mathrm{SL}_2(\mathbb{Z})$.
Therefore, $c^\sigma\in C_N$.
\end{proof}

\begin{lemma}\label{condition for modular unit}
Let $c\in\mathbb{C}$.
Then $g^1_N(\tau)-c$ has neither zeros nor poles on $\mathbb{H}$ if and only if $c\in C_N$.
\end{lemma}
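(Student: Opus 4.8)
The plan is to reduce the lemma to a description of the image $g^1_N(\mathbb{H})\subset\mathbb{C}$ and then to read that description off from the genus-zero isomorphism (\ref{surface isomorphism}).

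First I would observe that $g^1_N(\tau)$ is holomorphic and nowhere vanishing on $\mathbb{H}$. Indeed, performing the substitution $\tau\mapsto\tau/N$ in $g^1_N(\tau)=g_{1,N}(\tau/N)$ and using the table defining $g_{1,N}(\tau)$, one sees that $g^1_N$ is a finite product of integral powers of Siegel functions $g_{\left[\begin{smallmatrix}r_1\\0\end{smallmatrix}\right]}(\tau)$ with $0<r_1<1$. Each such Siegel function is a modular unit, so it has neither zeros nor poles on $\mathbb{H}$ (this is also immediate from (\ref{Siegel expansion}), since $|q^{n\pm r_1}|<1$ for $\tau\in\mathbb{H}$, $n\geq 0$ and $0<r_1<1$), and hence the same holds for any integral power of it and for the whole product. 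Consequently $g^1_N(\tau)-c$ has no pole on $\mathbb{H}$ for every $c\in\mathbb{C}$, and the lemma becomes equivalent to the statement: for $c\in\mathbb{C}$, the function $g^1_N(\tau)-c$ has no zero on $\mathbb{H}$ — equivalently, $c\notin g^1_N(\mathbb{H})$ — if and only if $c\in C_N$.

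Next I would invoke the isomorphism of compact Riemann surfaces (\ref{surface isomorphism}), namely $X^1(N)\xrightarrow{\sim}\mathbb{P}^1(\mathbb{C})$, $\tau\mapsto[g^1_N(\tau):1]$. Let $\pi\colon\mathbb{H}^*\to X^1(N)$ be the quotient map; then $X^1(N)=\pi(\mathbb{H})\sqcup\{\text{cusps of }X^1(N)\}$ is a disjoint union. Under (\ref{surface isomorphism}) the cusp $i\infty$ corresponds to $\infty$ (since $g^1_N(i\infty)=\infty$), while the images of the remaining cusps are exactly the elements of $C_N$, by definition of $C_N$. Since (\ref{surface isomorphism}) is a bijection onto $\mathbb{P}^1(\mathbb{C})$ and $g^1_N$ is finite-valued on $\mathbb{H}$, this forces
\begin{equation*}
g^1_N(\mathbb{H})=\mathbb{P}^1(\mathbb{C})\setminus\bigl(C_N\cup\{\infty\}\bigr)=\mathbb{C}\setminus C_N.
\end{equation*}
Therefore, for $c\in\mathbb{C}$ we have $c\notin g^1_N(\mathbb{H})$ if and only if $c\in C_N$, which together with the absence of poles on $\mathbb{H}$ noted above yields the lemma.

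The only point that requires genuine checking is the first one — that $g^1_N$ is holomorphic and non-vanishing on all of $\mathbb{H}$ — and this is immediate from the Siegel functions being modular units (or from the product expansion (\ref{Siegel expansion})); everything else is a formal consequence of (\ref{surface isomorphism}), Proposition \ref{genus zero}, and the definition of $C_N$.
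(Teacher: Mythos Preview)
Your proof is correct and follows the same approach as the paper, which simply declares the lemma immediate from the isomorphism (\ref{surface isomorphism}); you have merely unpacked that one line. Your separate argument for holomorphicity on $\mathbb{H}$ via the modular-unit property of Siegel functions is correct but in fact redundant, since the injectivity of (\ref{surface isomorphism}) together with $g^1_N(i\infty)=\infty$ already forces $g^1_N$ to have no pole on $\mathbb{H}$.
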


\begin{proof}
It is immediate from the isomorphism (\ref{surface isomorphism}).
\end{proof}

For each $c\in C_N$, we let $f_{N,c}(x)\in\mathbb{Q}[x]$ be the minimal polynomial of $c$ over $\mathbb{Q}$.

\begin{theorem}\label{generator}
Let $N$ be an integer such that $2\leq N\leq 10$ or $N=12$.
Then we have
\begin{eqnarray*}
\O_{1,N}(\mathbb{Q})&=&\mathbb{Q}\big[g_{1,N}(\tau), f_{N,c}(g_{1,N}(\tau))^{-1}\big]_{c\in C_N}\\
\O^1_N(\mathbb{Q})&=&\mathbb{Q}\big[g^1_N(\tau), f_{N,c}(g^1_N(\tau))^{-1}\big]_{c\in C_N}.
\end{eqnarray*}
More precisely, we attain
\begin{eqnarray*}
\O_{1,2}(\mathbb{Q})&=&\mathbb{Q}\big[g_{1,2}(\tau), g_{1,2}(\tau)^{-1}\big]\\
\O_{1,3}(\mathbb{Q})&=&\mathbb{Q}\big[g_{1,3}(\tau), g_{1,3}(\tau)^{-1}\big]\\
\O_{1,4}(\mathbb{Q})&=&\mathbb{Q}\big[g_{1,4}(\tau), g_{1,4}(\tau)^{-1}, (g_{1,4}(\tau)-16)^{-1}\big]\\
\O_{1,5}(\mathbb{Q})&=&\mathbb{Q}\big[g_{1,5}(\tau), g_{1,5}(\tau)^{-1}, (g_{1,5}(\tau)^2-11g_{1,5}(\tau)-1)^{-1}\big]\\
\O_{1,6}(\mathbb{Q})&=&\mathbb{Q}\big[g_{1,6}(\tau), g_{1,6}(\tau)^{-1}, (g_{1,6}(\tau)-8)^{-1}, (g_{1,6}(\tau)+1)^{-1}\big]\\
\O_{1,7}(\mathbb{Q})&=&\mathbb{Q}\big[g_{1,7}(\tau), g_{1,7}(\tau)^{-1}, (g_{1,7}(\tau)-1)^{-1}, (g_{1,7}(\tau)^3-8g_{1,7}(\tau)^2+5g_{1,7}(\tau)+1)^{-1}\big]\\
\O_{1,8}(\mathbb{Q})&=&\mathbb{Q}\big[g_{1,8}(\tau), g_{1,8}(\tau)^{-1}, (g_{1,8}(\tau)+1)^{-1}, (g_{1,8}(\tau)-1)^{-1}, (g_{1,8}(\tau)^2-6g_{1,8}(\tau)+1)^{-1}\big]\\
\O_{1,9}(\mathbb{Q})&=&\mathbb{Q}\big[g_{1,9}(\tau), g_{1,9}(\tau)^{-1}, (g_{1,9}(\tau)-1)^{-1}, (g_{1,9}(\tau)^2-g_{1,9}(\tau)+1)^{-1}, \\
&&\quad(g_{1,9}(\tau)^3-6g_{1,9}(\tau)^2+3g_{1,9}(\tau)+1)^{-1}\big]\\
\O_{1,10}(\mathbb{Q})&=&\mathbb{Q}\big[g_{1,10}(\tau), g_{1,10}(\tau)^{-1}, (g_{1,10}(\tau)+1)^{-1}, (g_{1,10}(\tau)-1)^{-1}, (g_{1,10}(\tau)^2+g_{1,10}(\tau)-1)^{-1}, \\
&&\quad(g_{1,10}(\tau)^2-4g_{1,10}(\tau)-1)^{-1}\big]\\
\O_{1,12}(\mathbb{Q})&=&\mathbb{Q}\big[g_{1,12}(\tau), g_{1,12}(\tau)^{-1}, (g_{1,12}(\tau)+1)^{-1}, (g_{1,12}(\tau)-1)^{-1}, (g_{1,12}(\tau)^2+1)^{-1},  \\
&&\quad (g_{1,12}(\tau)^2-g_{1,12}(\tau)+1)^{-1}, (g_{1,12}(\tau)^2-4g_{1,12}(\tau)+1)^{-1}\big].
\end{eqnarray*}
\end{theorem}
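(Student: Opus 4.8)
The plan is to first reduce to the ring $\O^1_N(\mathbb{Q})$, since the isomorphism (\ref{ring isomorphism}) sends $g_{1,N}(\tau)$ to $g_{1,N}(\tau/N)=g^1_N(\tau)$, hence sends each $f_{N,c}(g_{1,N}(\tau))$ to $f_{N,c}(g^1_N(\tau))$ (polynomial evaluation commutes with ring homomorphisms), so it carries $\mathbb{Q}[g_{1,N}(\tau),f_{N,c}(g_{1,N}(\tau))^{-1}]_{c\in C_N}$ isomorphically onto $\mathbb{Q}[g^1_N(\tau),f_{N,c}(g^1_N(\tau))^{-1}]_{c\in C_N}$. Thus the two displayed ring identities are equivalent, and the nine explicit formulas are literally the same computation. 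Fix $N\in\{2,\dots,10,12\}$ and abbreviate $g=g^1_N(\tau)$; note $g$ is transcendental over $\mathbb{Q}$ because $\mathcal{F}^1_N(\mathbb{Q})$ has transcendence degree one over $\mathbb{Q}$ and is generated by $g$ (Lemma \ref{field generation}).

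For the inclusion $\mathbb{Q}[g,f_{N,c}(g)^{-1}]_{c\in C_N}\subseteq\O^1_N(\mathbb{Q})$ I would argue as follows. The function $g$ lies in $\mathcal{F}^1_N(\mathbb{Q})$ by construction, and it is weakly holomorphic since $g_{1,N}(\tau)$ is a finite product of Siegel functions, hence a modular unit whose divisor is supported at the cusps. For $c\in C_N$ the polynomial $f_{N,c}(g)$ again belongs to $\O^1_N(\mathbb{Q})$, and it is not identically zero (as $g$ is transcendental), so it remains to check that $f_{N,c}(g)$ has no zero on $\mathbb{H}$; then $f_{N,c}(g)^{-1}$ is holomorphic on $\mathbb{H}$ with a possible pole only at the cusps, hence weakly holomorphic, and it has rational Fourier coefficients. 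A zero of $f_{N,c}(g)$ at some $\tau\in\mathbb{H}$ would force $g^1_N(\tau)=c'$ for a root $c'$ of $f_{N,c}$, i.e. for a Galois conjugate $c'$ of $c$; by Lemma \ref{conjugate of c} such a $c'$ again lies in $C_N$, whence Lemma \ref{condition for modular unit} says $g^1_N(\tau)-c'$ has no zero on $\mathbb{H}$ --- a contradiction.

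For the reverse inclusion, let $h\in\O^1_N(\mathbb{Q})$. By Lemma \ref{field generation} we may write $h=P(g)/Q(g)$ with $P,Q\in\mathbb{Q}[x]$, $\gcd(P,Q)=1$, and this representation is unique by transcendence of $g$. I claim $Q=\alpha\prod_{c\in C_N}f_{N,c}(x)^{e_c}$ for some $\alpha\in\mathbb{Q}^\times$ and integers $e_c\geq 0$. Indeed, the map (\ref{surface isomorphism}) identifies $X^1(N)$ with $\mathbb{P}^1(\mathbb{C})$ through $g^1_N$, so every complex root $c_0$ of $Q$ is of the form $c_0=g^1_N(\tau_0)$ for some $\tau_0$ on the modular curve; since $\gcd(P,Q)=1$, $h$ has a pole at $\tau_0$, and as $h$ is weakly holomorphic, $\tau_0$ is a cusp, which cannot be $i\infty$ because $g^1_N(i\infty)=\infty$; therefore $c_0=g^1_N(\tau_0)\in C_N$. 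Hence every root of $Q$ lies in $C_N$, and because $Q\in\mathbb{Q}[x]$ and $C_N$ is stable under $\mathrm{Gal}(\mathbb{Q}_{\mathrm{ab}}/\mathbb{Q})$ by Lemma \ref{conjugate of c}, each monic irreducible factor of $Q$ over $\mathbb{Q}$ is the minimal polynomial $f_{N,c}$ of one of its roots $c\in C_N$, proving the claim. Consequently $h=\alpha^{-1}P(g)\prod_{c}f_{N,c}(g)^{-e_c}\in\mathbb{Q}[g,f_{N,c}(g)^{-1}]_{c\in C_N}$, giving equality; transporting back through (\ref{ring isomorphism}) yields the assertion for $\O_{1,N}(\mathbb{Q})$.

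It then remains to make the list explicit: for each $N$ one reads $C_N$ off the table and computes the minimal polynomial over $\mathbb{Q}$ of each element --- namely $x-c$ for the rational values, and for the values written in roots of unity (as for $N=5,7,8,9,10,12$) one evaluates the elementary symmetric functions of the relevant Galois orbit inside the appropriate cyclotomic field, a routine calculation; deleting repeated factors produces the stated generators $g_{1,N}(\tau)^{-1}$, $(g_{1,N}(\tau)-16)^{-1}$, $(g_{1,5}(\tau)^2-11g_{1,5}(\tau)-1)^{-1}$, and so on. The one step that genuinely drives the argument is the Galois-stability of $C_N$ (Lemma \ref{conjugate of c}): without it the denominator $Q$ need not factor through the polynomials $f_{N,c}$, and the whole description of $\O^1_N(\mathbb{Q})$ would break down. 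Everything else is bookkeeping with the genus-zero isomorphism (\ref{surface isomorphism}) and the modular-unit property of the Siegel functions, so I expect no further obstacle beyond the symmetric-function computations in the last step.
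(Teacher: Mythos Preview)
Your proof is correct and follows essentially the same approach as the paper: both reduce to $\O^1_N(\mathbb{Q})$ via the isomorphism (\ref{ring isomorphism}), use Lemmas \ref{conjugate of c} and \ref{condition for modular unit} to show each $f_{N,c}(g^1_N)$ is a modular unit, and for the reverse inclusion write $h=P(g)/Q(g)$ with $\gcd(P,Q)=1$ and argue that any root of $Q$ outside $C_N$ would force a pole of $h$ on $\mathbb{H}$. The paper phrases the last step as a contradiction via Lemma \ref{condition for modular unit} rather than via the surjectivity of (\ref{surface isomorphism}), but this is the same content.
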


\begin{proof}
Since every zero of $f_{N,c}(x)$ lies in the set $C_N$ by Lemma \ref{conjugate of c},  $f_{N,c}(g^1_N(\tau))$ is a modular unit in $\F^1_N(\mathbb{Q})$ by Lemma \ref{condition for modular unit}.
This shows that
$\O^1_N(\mathbb{Q})\supseteq\mathbb{Q}\big[g^1_N(\tau), f_{N,c}(g^1_N(\tau))^{-1}\big]_{c\in C_N}$.
\par
Conversely, let $h(\tau)\in \O^1_N(\mathbb{Q})$.
By Lemma \ref{field generation} we can write
\begin{equation*}
h(\tau)=\frac{P(g^1_N(\tau))}{Q(g^1_N(\tau))}
\end{equation*}
for some polynomials $P(x), Q(x)\in\mathbb{Q}[x]$ which are relatively prime.
Suppose that $Q(x)$ has a zero $c_0\in\overline{\mathbb{Q}}\setminus C_N$, where $\overline{\mathbb{Q}}$ is the algebraic closure of $\mathbb{Q}$.
From Lemma \ref{condition for modular unit} we see that $g^1_N(\tau_0)-c_0=0$ for some $\tau_0\in\mathbb{H}$, which yields $Q(g^1_N(\tau_0))=0$.
Since $P(x)$ is not divisible by $x-c_0$ in $\overline{\mathbb{Q}}(x)$, we have $P(g^1_N(\tau_0))\neq 0$.
It gives a contradiction because $h(\tau)$ is weakly holomorphic.
Thus any zero of $Q(x)$ belongs to $C_N$.
Since $Q(x)\in\mathbb{Q}(x)$, we derive that $Q(x)$ has a zero $c\in C_N$ if and only if $f_{N,c}(x)~|~Q(x)$.
Therefore, we get $\O^1_N(\mathbb{Q})\subseteq\mathbb{Q}\big[g^1_N(\tau), f_{N,c}(g^1_N(\tau))^{-1}\big]_{c\in C_N}$.
And so, we have the theorem by (\ref{ring isomorphism}).

\end{proof}

\section{Generators of $\O_{1,N}(\mathbb{Q})$}
In this section, we shall construct generators of the ring $\O_{1,N}(\mathbb{Q})$ over $\mathbb{Q}$ when $N$ is divisible by $4,5,6,7$ or $9$.

\begin{lemma}\label{Fricke belongs to}
Let $m~(>1)$ be an integer.
\begin{itemize}
\item[\textup{(i)}]
$f_{\left[\begin{smallmatrix}k/m\\ 0\end{smallmatrix}\right]}(\tau)\in\O^1_m(\mathbb{Q})$ for $k\in\mathbb{Z}\setminus m\mathbb{Z}$.
\item[\textup{(ii)}] $\F^1_m(\mathbb{Q})=\F_1(f_{\left[\begin{smallmatrix}1/m\\ 0\end{smallmatrix}\right]}(\tau))$.
\end{itemize}
\end{lemma}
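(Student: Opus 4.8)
The plan is to prove (i) and (ii) together by a Galois-theoretic comparison, using the fact (Proposition \ref{modular function action}) that fixed fields are determined by subgroups of $\mathrm{GL}_2(\mathbb{Z}/m\mathbb{Z})/\{\pm I_2\}$, together with the transformation formula for Fricke functions in Proposition \ref{properties of Fricke}(ii). First I would recall that, by the conjugation relation \eqref{conjugate} and the ring/field isomorphisms \eqref{field isomorphism}, \eqref{ring isomorphism}, it suffices to prove the statement for $\G_1(m)$ in place of $\G^1(m)$; that is, I would show $f_{\left[\begin{smallmatrix}0\\ k/m\end{smallmatrix}\right]}(m\tau)\in\O_{1,m}(\mathbb{Q})$ and $\F_{1,m}(\mathbb{Q})=\F_1\!\left(f_{\left[\begin{smallmatrix}0\\ 1/m\end{smallmatrix}\right]}(m\tau)\right)$, then transport back via $\tau\mapsto\tau/N$. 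Alternatively one can argue directly with $\G^1(m)$; I will describe the $\G^1(m)$ version since it matches the statement.

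For (i): the Galois group $\mathrm{Gal}(\F_m/\F^1_m(\mathbb{Q}))$ is the group displayed in \eqref{Galois group}, namely $G_m\cdot H$ where $H=\{\gamma\in\mathrm{SL}_2(\mathbb{Z}/m\mathbb{Z})/\{\pm I_2\}:\gamma\equiv\left[\begin{smallmatrix}1&0\\ *&1\end{smallmatrix}\right]\}$. So I would check that $f_{\mathbf{r}}(\tau)$ with $\mathbf{r}=\left[\begin{smallmatrix}k/m\\ 0\end{smallmatrix}\right]$ is fixed by every element of $G_m\cdot H$. By Proposition \ref{modular function action}(i) the action of $G_m$ is on Fourier coefficients, and since $f_{\mathbf{r}}(\tau)$ has rational Fourier coefficients (it is a polynomial combination of $j$ and the $\wp$-values, and more to the point lies in $\F^1_m(\mathbb{Q})$ once we know it is $H$-invariant) the $G_m$-action is trivial; alternatively, by Proposition \ref{properties of Fricke}(ii), $\left[\begin{smallmatrix}1&0\\ 0&d\end{smallmatrix}\right]$ sends $f_{\mathbf{r}}$ to $f_{{}^t\!\left[\begin{smallmatrix}1&0\\ 0&d\end{smallmatrix}\right]\mathbf{r}}=f_{\left[\begin{smallmatrix}k/m\\ 0\end{smallmatrix}\right]}=f_{\mathbf{r}}$ since the second coordinate of $\mathbf{r}$ is $0$. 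For $\gamma\in H$, write $\gamma\equiv\left[\begin{smallmatrix}1&0\\ c&1\end{smallmatrix}\right]\pmod m$; then ${}^t\gamma\equiv\left[\begin{smallmatrix}1&c\\ 0&1\end{smallmatrix}\right]$, so ${}^t\gamma\,\mathbf{r}\equiv\left[\begin{smallmatrix}k/m\\ 0\end{smallmatrix}\right]\pmod{\mathbb{Z}^2}$, and Proposition \ref{properties of Fricke}(ii) together with part (i) of that proposition gives $f_{\mathbf{r}}^\gamma=f_{{}^t\gamma\,\mathbf{r}}=f_{\mathbf{r}}$. Hence $f_{\mathbf{r}}\in\F^1_m(\mathbb{Q})$; it is weakly holomorphic by Proposition \ref{properties of Fricke}(i), so $f_{\mathbf{r}}\in\O^1_m(\mathbb{Q})$, which is (i).

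For (ii): let $L=\F_1\!\left(f_{\left[\begin{smallmatrix}1/m\\ 0\end{smallmatrix}\right]}(\tau)\right)\subseteq\F^1_m(\mathbb{Q})$. I would compute the subgroup of $\mathrm{Gal}(\F_m/\F_1)\cong\mathrm{GL}_2(\mathbb{Z}/m\mathbb{Z})/\{\pm I_2\}$ fixing $L$, i.e.\ the stabilizer of the single vector $\mathbf{r}_0=\left[\begin{smallmatrix}1/m\\ 0\end{smallmatrix}\right]$ modulo $\pm\mathbb{Z}^2$ under $\alpha\mapsto{}^t\alpha\,\mathbf{r}_0$, using Proposition \ref{properties of Fricke}(ii) and \ref{Fricke and Siegel}(i). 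An element $\alpha=\left[\begin{smallmatrix}a&b\\ c&d\end{smallmatrix}\right]$ fixes $f_{\mathbf{r}_0}$ iff ${}^t\alpha\,\mathbf{r}_0\equiv\pm\mathbf{r}_0$, i.e.\ $\left[\begin{smallmatrix}a/m\\ b/m\end{smallmatrix}\right]\equiv\pm\left[\begin{smallmatrix}1/m\\ 0\end{smallmatrix}\right]\pmod{\mathbb{Z}^2}$, which forces $a\equiv\pm1$, $b\equiv0\pmod m$; modulo $\pm I_2$ this is exactly the group in \eqref{Galois group} (the image of $G_m$ together with matrices $\equiv\left[\begin{smallmatrix}1&0\\ *&*\end{smallmatrix}\right]$ with lower-right entry a unit — and one checks this coincides with $\mathrm{Gal}(\F_m/\F^1_m(\mathbb{Q}))$). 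By the fundamental theorem of Galois theory the fixed field of this group is $\F^1_m(\mathbb{Q})$, so $L=\F^1_m(\mathbb{Q})$, giving (ii).

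The main obstacle is bookkeeping with the $\{\pm I_2\}$ quotient and the precise identification of the stabilizer subgroup in (ii) with the group in \eqref{Galois group}: one must be careful that "second coordinate $\equiv0$, first coordinate a unit, up to sign" translates to the transpose condition "$\left[\begin{smallmatrix}1&0\\ *&1\end{smallmatrix}\right]$-shape times $G_m$" rather than its transpose, and that no spurious extra elements (e.g.\ those permuting $\mathbf{r}_0$ with $-\mathbf{r}_0$ nontrivially) sneak in. Everything else is a direct application of Propositions \ref{modular function action}, \ref{properties of Fricke}, and \ref{Fricke and Siegel}(i), plus the isomorphisms \eqref{conjugate}--\eqref{ring isomorphism} to pass between $\G^1(m)$ and $\G_1(m)$.
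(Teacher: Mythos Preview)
Your proposal is correct and follows essentially the same route as the paper's own proof: both arguments use the transformation law $f_{\mathbf{r}}^\alpha=f_{{}^t\alpha\mathbf{r}}$ from Proposition~\ref{properties of Fricke}(ii) together with Proposition~\ref{Fricke and Siegel}(i) to compute the stabilizer of $f_{\left[\begin{smallmatrix}1/m\\0\end{smallmatrix}\right]}$ in $\mathrm{GL}_2(\mathbb{Z}/m\mathbb{Z})/\{\pm I_2\}$ and identify it with the group in \eqref{Galois group}. The only cosmetic difference is that for (i) the paper separates the two ingredients---first deducing rational Fourier coefficients from $G_m$-invariance via Proposition~\ref{modular function action}, then checking $\Gamma^1(m)$-invariance by a direct composition---whereas you fold both into a single Galois fixed-field argument using \eqref{Galois group}; the content is the same.
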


\begin{proof}
By Proposition \ref{properties of Fricke} we are certain that $f_{\left[\begin{smallmatrix}k/m\\ 0\end{smallmatrix}\right]}(\tau)$ is weakly holomorphic and invariant under the action of $G_m$.
Hence it has rational Fourier coefficients by Proposition \ref{modular function action}.
Furthermore, for $\gamma=\left[
\begin{matrix}
a&b\\
c&d
\end{matrix}\right] \in \G^1(m)$ we deduce by Proposition \ref{modular function action} and \ref{properties of Fricke} that
\begin{equation*}
\Big(f_{\left[\begin{smallmatrix}k/m\\ 0\end{smallmatrix}\right]}\circ\gamma\Big)(\tau)
=f_{{^t}\gamma\left[\begin{smallmatrix}k/m\\ 0\end{smallmatrix}\right]}(\tau)
=f_{\left[\begin{smallmatrix}ka/m\\ kb/m\end{smallmatrix}\right]}(\tau)
=f_{\left[\begin{smallmatrix}k/m\\ 0\end{smallmatrix}\right]}(\tau)
\end{equation*}
since $a\equiv 1\pmod{m}$ and $b\equiv 0\pmod{m}$.
Thus $f_{\left[\begin{smallmatrix}k/m\\ 0\end{smallmatrix}\right]}(\tau)$ is modular for $\G^1(m)$, which proves (i).
\par 
By (i) we have $\F_1(f_{\left[\begin{smallmatrix}1/m\\ 0\end{smallmatrix}\right]}(\tau))\subseteq\F^1_m(\mathbb{Q})$.
Let $\sigma\in\mathrm{Gal}(\F_m/\F_1(f_{\left[\begin{smallmatrix}1/m\\ 0\end{smallmatrix}\right]}(\tau)))$.
Since $f_{\left[\begin{smallmatrix}1/m\\ 0\end{smallmatrix}\right]}(\tau)$ is $G_m$-invariant, $\sigma$ is represented by $\alpha_\sigma=\left[
\begin{matrix}
a&b\\
c&d
\end{matrix}\right]\in\mathrm{SL}_2(\mathbb{Z}/m\mathbb{Z})/\{\pm I_2\}$.
Then we get
\begin{equation}\label{conjugates}
f_{\left[\begin{smallmatrix}1/m\\ 0\end{smallmatrix}\right]}(\tau)
=f_{\left[\begin{smallmatrix}1/m\\ 0\end{smallmatrix}\right]}(\tau)^\sigma
=f_{{^t}\alpha_\sigma\left[\begin{smallmatrix}1/m\\ 0\end{smallmatrix}\right]}(\tau)
=f_{\left[\begin{smallmatrix}a/m\\ b/m\end{smallmatrix}\right]}(\tau).
\end{equation}
Therefore, we achieve $a\equiv d\equiv\pm 1\pmod{m}$ and $b\equiv 0\pmod{m}$ by Proposition \ref{Fricke and Siegel} (i), which yields $\F_1(f_{\left[\begin{smallmatrix}1/m\\ 0\end{smallmatrix}\right]}(\tau))\supseteq\F^1_m(\mathbb{Q})$ by (\ref{Galois group}).
This completes the proof.

\end{proof}

For integers $m>3$ and $N>m$ such that $N\equiv 0\pmod{m}$, let
\begin{equation*}
f^1_{m,N}(\tau)=\frac{f_{\left[\begin{smallmatrix}1/N\\ 0\end{smallmatrix}\right]}(\tau)-f_{\left[\begin{smallmatrix}1/m\\ 0\end{smallmatrix}\right]}(\tau)}{f_{\left[\begin{smallmatrix}2/m\\ 0\end{smallmatrix}\right]}(\tau)-f_{\left[\begin{smallmatrix}1/m\\ 0\end{smallmatrix}\right]}(\tau)}.
\end{equation*}
Note that $f_{\left[\begin{smallmatrix}2/m\\ 0\end{smallmatrix}\right]}(\tau)\neq f_{\left[\begin{smallmatrix}1/m\\ 0\end{smallmatrix}\right]}(\tau)$ by Proposition \ref{Fricke and Siegel} (i).
It then follows from Remark \ref{Fricke modular unit} and Lemma \ref{Fricke belongs to} that $f^1_{m,N}(\tau)$ is a modular unit in $\O^1_N(\mathbb{Q})$.

\begin{theorem}\label{generator2}
Let $m>3$ and $N~(>m)$ be integers such that $N\equiv 0\pmod{m}$.
Then we have
\begin{eqnarray*}
\O_{1,N}(\mathbb{Q})&=&\O_{1,m}(\mathbb{Q})\big[f^1_{m,N}(N\tau)\big]\\
\O^1_N(\mathbb{Q})&=&\O^1_m(\mathbb{Q})\big[f^1_{m,N}(\tau)\big].
\end{eqnarray*}
\end{theorem}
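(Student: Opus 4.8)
By the ring isomorphism (\ref{ring isomorphism}) and the field isomorphism (\ref{field isomorphism}), the two displayed equalities are equivalent, so I will work entirely with $\G^1(N)$ and prove $\O^1_N(\mathbb{Q})=\O^1_m(\mathbb{Q})[f^1_{m,N}(\tau)]$. The plan has two halves: first, that the right-hand side actually sits inside $\O^1_N(\mathbb{Q})$; second, that it exhausts $\O^1_N(\mathbb{Q})$.

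For the inclusion $\supseteq$, note that $\O^1_m(\mathbb{Q})\subseteq\O^1_N(\mathbb{Q})$ because $\G^1(N)\subseteq\G^1(m)$ when $m\mid N$ (so modular functions for the smaller group are modular for the larger), and rationality of Fourier coefficients is preserved. Then I would invoke the remark made just before the theorem statement: $f^1_{m,N}(\tau)$ is a Weierstrass unit, hence a modular unit, lying in $\O^1_N(\mathbb{Q})$ by Remark \ref{Fricke modular unit} together with Lemma \ref{Fricke belongs to}(i) — indeed $f_{[1/N\ 0]^t}\in\O^1_N(\mathbb{Q})$ and $f_{[1/m\ 0]^t},f_{[2/m\ 0]^t}\in\O^1_m(\mathbb{Q})\subseteq\O^1_N(\mathbb{Q})$, and the denominator is a unit (nonvanishing on $\mathbb{H}$) because it is itself a ratio of products of Siegel functions. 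That gives $\O^1_m(\mathbb{Q})[f^1_{m,N}(\tau)]\subseteq\O^1_N(\mathbb{Q})$.

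For the reverse inclusion I would argue in two steps. Step one: the \emph{field} generated. I claim $\F^1_N(\mathbb{Q})=\F^1_m(\mathbb{Q})(f^1_{m,N}(\tau))$. By Lemma \ref{Fricke belongs to}(ii), $\F^1_N(\mathbb{Q})=\F_1(f_{[1/N\ 0]^t}(\tau))$ and $\F^1_m(\mathbb{Q})=\F_1(f_{[1/m\ 0]^t}(\tau))$; since the denominator of $f^1_{m,N}$ lies in $\F^1_m(\mathbb{Q})$ and is nonzero, adjoining $f^1_{m,N}(\tau)$ to $\F^1_m(\mathbb{Q})$ is the same as adjoining $f_{[1/N\ 0]^t}(\tau)$, and $\F^1_m(\mathbb{Q})(f_{[1/N\ 0]^t}(\tau))=\F_1(f_{[1/m\ 0]^t},f_{[1/N\ 0]^t})=\F_1(f_{[1/N\ 0]^t})$ because $m\mid N$ forces $f_{[1/m\ 0]^t}$ to be a polynomial in (a rational function of) $f_{[1/N\ 0]^t}$ — concretely $[1/m\ 0]^t = (N/m)\cdot[1/N\ 0]^t$, so $f_{[1/m\ 0]^t}\in\F_1(f_{[1/N\ 0]^t})$ by Lemma \ref{Fricke belongs to}(ii) applied with the divisor $N$. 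Hence the claimed field equality. Step two: upgrade from fields to rings. Take $h\in\O^1_N(\mathbb{Q})$; write $h=P(u)/Q(u)$ with $u=f^1_{m,N}(\tau)$, $P,Q\in\F^1_m(\mathbb{Q})[x]$ coprime. The zeros of $Q$ over $\overline{\F^1_m(\mathbb{Q})}$, if any occurred at points where $u$ takes that value on $\mathbb{H}$, would force poles of $h$ on $\mathbb{H}$, contradicting weak holomorphy; since $u$ is a modular unit its finite values on $X^1(N)$ are attained precisely at the cusps, so $Q$ vanishes only at the finitely many cusp-values of $u$, each of which lies in $\O^1_m(\mathbb{Q})[u]$ as a unit (because the fibers of $u$ over those values are supported at cusps, so $u-c$ for such a cusp-value $c$ is again a modular unit). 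Clearing these, $h\in\O^1_m(\mathbb{Q})[u,(u-c)^{-1}]_c$; but each $(u-c)^{-1}$ is already in $\O^1_m(\mathbb{Q})[u]$ — this is the subtle point, paralleling the proof of Theorem \ref{generator} — so in fact $h\in\O^1_m(\mathbb{Q})[u]$, as required.

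\textbf{Main obstacle.} The delicate step is the ring-theoretic passage in step two: showing that the \emph{finitely many} cusp-values $c$ of $u=f^1_{m,N}(\tau)$ are such that $(u-c)^{-1}$ already lies in $\O^1_m(\mathbb{Q})[u]$, rather than merely in a larger localization. This should follow because $u-c$ is a modular unit on $X^1(N)$ whose divisor is supported at cusps, and its norm down to $\F^1_m(\mathbb{Q})$ (or a suitable symmetric function of the Galois conjugates of $c$) is a unit in $\O^1_m(\mathbb{Q})$; but making the bookkeeping precise — identifying the cusp-values, checking they lie in $\O^1_m(\mathbb{Q})$ (via the explicit Siegel-function value computations as in the example preceding Lemma \ref{conjugate of c} and the minimal-polynomial argument of Theorem \ref{generator}), and confirming the resulting products are units — is where the real work lies. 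Everything else is formal manipulation with the results already established in Sections 3–5.
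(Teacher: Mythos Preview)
Your easy inclusion and your Step one (field generation) are fine and match the paper. The problem is Step two, where there is a genuine gap.

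First, a structural confusion: since $u=f^1_{m,N}(\tau)$ generates the \emph{finite} extension $\F^1_N(\mathbb{Q})/\F^1_m(\mathbb{Q})$, every $h\in\F^1_N(\mathbb{Q})$ is already a \emph{polynomial} $h=\sum_{i=0}^{d-1}c_i u^i$ with $c_i\in\F^1_m(\mathbb{Q})$; no denominator $Q$ is needed. The issue is therefore not ``roots of $Q$'' but whether the $c_i$ land in $\O^1_m(\mathbb{Q})$. If you do insist on writing $h=P(u)/Q(u)$ with $P,Q\in\F^1_m(\mathbb{Q})[x]$, the roots of $Q$ live in an algebraic closure of the \emph{function field} $\F^1_m(\mathbb{Q})$, i.e.\ they are functions, not complex numbers, so the phrase ``points where $u$ takes that value on $\mathbb{H}$'' does not parse. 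Second, your key claim ``since $u$ is a modular unit its finite values on $X^1(N)$ are attained precisely at the cusps'' is false: that $u$ is a modular unit says only that $u$ avoids $0$ and $\infty$ on $\mathbb{H}$; it attains every other complex value there, so for a generic constant $c$ the function $u-c$ \emph{does} vanish on $\mathbb{H}$ and $(u-c)^{-1}\notin\O^1_N(\mathbb{Q})$. The Hauptmodul argument of Theorem~\ref{generator} worked because $g^1_N$ was a degree-one map to $\mathbb{P}^1$; $u$ is not, and that mechanism is simply unavailable here.

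The paper's argument is precisely the norm/discriminant idea you allude to in your ``Main obstacle'' paragraph, but executed directly. Write $h=\sum_{i=0}^{d-1}c_i u^i$, multiply by $u^j$ and take the trace $\mathrm{Tr}=\mathrm{Tr}_{\F^1_N(\mathbb{Q})/\F^1_m(\mathbb{Q})}$ to obtain the linear system $T\mathbf{c}=(\mathrm{Tr}(u^j h))_j$ with $T=(\mathrm{Tr}(u^{i+j}))_{i,j}$. Since $u,h\in\O^1_N(\mathbb{Q})$, all entries on the right and in $T$ lie in $\O^1_m(\mathbb{Q})$, whence $c_i\in\det(T)^{-1}\O^1_m(\mathbb{Q})$. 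The Vandermonde identity gives $\det(T)=\prod_{i<j}(f_i-f_j)^2$, where $f_1,\dots,f_d$ are the Galois conjugates of $u$. By Proposition~\ref{properties of Fricke} each conjugate has the same shape as $u$ with $\left[\begin{smallmatrix}1/N\\0\end{smallmatrix}\right]$ replaced by some primitive $\left[\begin{smallmatrix}a/N\\b/N\end{smallmatrix}\right]$, so each difference $f_i-f_j$ is again a Weierstrass unit by Remark~\ref{Fricke modular unit}. Hence $\det(T)$ is a unit in $\O^1_m(\mathbb{Q})$ and every $c_i\in\O^1_m(\mathbb{Q})$. This discriminant-is-a-unit computation is the missing idea; your proposal gestures toward it but never arrives, and the route you actually pursue (localizing at ``cusp-values'' of $u$) does not work.
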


\begin{proof}
It suffices to prove the last assertion by the isomorphism (\ref{ring isomorphism}).
Clearly, $\O^1_N(\mathbb{Q})\supseteq\O^1_m(\mathbb{Q})\big[f^1_{m,N}(\tau)\big]$.
\par
As for the converse inclusion, let $h(\tau)\in\O^1_N(\mathbb{Q})$.
We see from Lemma \ref{Fricke belongs to} that 
\begin{equation*}
\F^1_N(\mathbb{Q})=\F_1(f_{\left[\begin{smallmatrix}1/N\\ 0\end{smallmatrix}\right]}(\tau))
=\F^1_m(\mathbb{Q})(f^1_{m,N}(\tau)).
\end{equation*}
Thus $h=h(\tau)$ can be written as
\begin{equation}\label{expression}
h=\sum_{i=0}^{d-1}c_i f^i
\end{equation}
where $f=f^1_{m,N}(\tau)$, $d=[\F^1_N(\mathbb{Q}):\F^1_m(\mathbb{Q})]$ and $c_i\in \F^1_m(\mathbb{Q})$ for every $i$.
By multiplying both sides of (\ref{expression}) by $f^j$ for each $j=0,1,\ldots, d-1$ and taking the trace $\mathrm{Tr}=\mathrm{Tr}_{\F^1_N(\mathbb{Q})/\F^1_m(\mathbb{Q})}$, we obtain a linear system
\begin{equation*}
\left[\begin{matrix}
\mathrm{Tr}(1) &\mathrm{Tr}(f) &\cdots &\mathrm{Tr}(f^{d-1})\\ 
\mathrm{Tr}(f) &\mathrm{Tr}(f^2)&\cdots &\mathrm{Tr}(f^d)\\
\vdots&\vdots&&\vdots\\
\mathrm{Tr}(f^{d-1}) &\mathrm{Tr}(f^d) &\cdots& \mathrm{Tr}(f^{2d-2})
\end{matrix}\right]
\left[\begin{matrix}
c_0\\ 
c_1\\
\vdots\\
c_{d-1}
\end{matrix}\right]
=\left[\begin{matrix}
\mathrm{Tr}(h)\\ 
\mathrm{Tr}(fh)\\
\vdots\\
\mathrm{Tr}(f^{d-1}h)
\end{matrix}\right].
\end{equation*}
For simplicity,  put
\begin{equation*}
T=\left[\begin{matrix}
\mathrm{Tr}(1) &\mathrm{Tr}(f) &\cdots &\mathrm{Tr}(f^{d-1})\\ 
\mathrm{Tr}(f) &\mathrm{Tr}(f^2)&\cdots &\mathrm{Tr}(f^d)\\
\vdots&\vdots&&\vdots\\
\mathrm{Tr}(f^{d-1}) &\mathrm{Tr}(f^d) &\cdots& \mathrm{Tr}(f^{2d-2})
\end{matrix}\right].
\end{equation*}
Since $f,h\in\O^1_N(\mathbb{Q})$, we attain $c_i\in\det(T)^{-1}\O^1_m(\mathbb{Q})$ for $i=0,1,\ldots, d-1$.
Now, let $f_1,f_2,\ldots,f_d$ be the Galois conjugates of $f$ over $\F^1_m(\mathbb{Q})$.
Then, by the Vandermonde determinant formula, we get
\begin{eqnarray*}
\det(T)&=&\left|\begin{matrix}
\sum_{k=1}^d f_k^0 &\sum_{k=1}^d f_k^1 &\cdots&\sum_{k=1}^d f_k^{d-1}\\ 
\sum_{k=1}^d f_k^1 &\sum_{k=1}^d f_k^2 &\cdots&\sum_{k=1}^d f_k^{d}\\
\vdots&\vdots&&\vdots\\
\sum_{k=1}^d f_k^{d-1} &\sum_{k=1}^d f_k^d &\cdots&\sum_{k=1}^d f_k^{2d-2}
\end{matrix}\right|\\
&=&\left|\begin{matrix}
f_1^0 & f_2^0 &\cdots& f_d^0\\ 
f_1^1 & f_2^1 &\cdots& f_d^1\\ 
\vdots&\vdots&&\vdots\\
f_1^{d-1} & f_2^{d-1} &\cdots& f_d^{d-1}
\end{matrix}\right|\cdot
\left|\begin{matrix}
f_1^0 & f_1^1 &\cdots& f_1^{d-1}\\ 
f_2^0 & f_2^1 &\cdots& f_2^{d-1}\\ 
\vdots&\vdots&&\vdots\\
f_d^0 & f_d^1 &\cdots& f_d^{d-1}
\end{matrix}\right|\\
&=&\prod_{1\leq i<j\leq d}(f_i-f_j)^2.
\end{eqnarray*}
It follows from Proposition \ref{Fricke and Siegel}, Lemma \ref{Fricke belongs to} and (\ref{conjugates}) that each $f_i-f_j$ is of the form 
\begin{equation*}
\frac{f_{\left[\begin{smallmatrix}a/N\\ b/N\end{smallmatrix}\right]}(\tau)-f_{\left[\begin{smallmatrix}1/m\\ 0\end{smallmatrix}\right]}(\tau)}{f_{\left[\begin{smallmatrix}2/m\\ 0\end{smallmatrix}\right]}(\tau)-f_{\left[\begin{smallmatrix}1/m\\ 0\end{smallmatrix}\right]}(\tau)}
-\frac{f_{\left[\begin{smallmatrix}c/N\\ d/N\end{smallmatrix}\right]}(\tau)-f_{\left[\begin{smallmatrix}1/m\\ 0\end{smallmatrix}\right]}(\tau)}{f_{\left[\begin{smallmatrix}2/m\\ 0\end{smallmatrix}\right]}(\tau)-f_{\left[\begin{smallmatrix}1/m\\ 0\end{smallmatrix}\right]}(\tau)}
=\frac{f_{\left[\begin{smallmatrix}a/N\\ b/N\end{smallmatrix}\right]}(\tau)-f_{\left[\begin{smallmatrix}c/N\\ d/N\end{smallmatrix}\right]}(\tau)}{f_{\left[\begin{smallmatrix}2/m\\ 0\end{smallmatrix}\right]}(\tau)-f_{\left[\begin{smallmatrix}1/m\\ 0\end{smallmatrix}\right]}(\tau)}
\end{equation*}
for some $a,b,c,d\in\mathbb{Z}$ such that $\left[\begin{matrix}a/N\\ b/N\end{matrix}\right]\not\equiv\pm\left[\begin{matrix}c/N\\ d/N\end{matrix}\right]\pmod{\mathbb{Z}^2}$ and $\gcd(a,b,N)= \gcd(c,d,N)=1$.
Hence $\det(T)$ is a modular unit in $\O^1_m(\mathbb{Q})$ by Remark \ref{Fricke modular unit}, and so $c_i\in\O^1_m(\mathbb{Q})$ for every $i$.
Therefore, $h(\tau)\in\O^1_m(\mathbb{Q})\big[f^1_{m,N}(\tau)\big]$ as desired.
\end{proof}

\begin{corollary}\label{main corollary}
Let $N$ be a positive integer.
\begin{itemize}
\item[\textup{(i)}]
If $N>4$ and $N\equiv 0\pmod{4}$, then we have
\begin{eqnarray*}
\O_{1,N}(\mathbb{Q})&=&\mathbb{Q}\big[g_{1,4}(\tau), g_{1,4}(\tau)^{-1}, (g_{1,4}(\tau)-16)^{-1}, f^1_{4,N}(N\tau)\big]\\
\O^1_N(\mathbb{Q})&=&\mathbb{Q}\big[g^1_4(\tau), g^1_4(\tau)^{-1}, (g^1_4(\tau)-16)^{-1}, f^1_{4,N}(\tau)\big].
\end{eqnarray*}
\item[\textup{(ii)}]
If $N>5$ and $N\equiv 0\pmod{5}$, then we have
\begin{eqnarray*}
\O_{1,N}(\mathbb{Q})&=&\mathbb{Q}\big[g_{1,5}(\tau), g_{1,5}(\tau)^{-1}, (g_{1,5}(\tau)^2-11g_{1,5}(\tau)-1)^{-1}, f^1_{5,N}(N\tau)\big]\\
\O^1_N(\mathbb{Q})&=&\mathbb{Q}\big[g^1_5(\tau), g^1_5(\tau)^{-1}, (g^1_5(\tau)^2-11g^1_5(\tau)-1)^{-1}, f^1_{5,N}(\tau)\big].
\end{eqnarray*}
\item[\textup{(iii)}]
If $N>6$ and $N\equiv 0\pmod{6}$, then we have
\begin{eqnarray*}
\O_{1,N}(\mathbb{Q})&=&\mathbb{Q}\big[g_{1,6}(\tau), g_{1,6}(\tau)^{-1}, (g_{1,6}(\tau)-8)^{-1}, (g_{1,6}(\tau)+1)^{-1}, f^1_{6,N}(N\tau)\big]\\
\O^1_N(\mathbb{Q})&=&\mathbb{Q}\big[g^1_6(\tau), g^1_6(\tau)^{-1}, (g^1_6(\tau)-8)^{-1}, (g^1_6(\tau)+1)^{-1}, f^1_{6,N}(\tau)\big].
\end{eqnarray*}
\item[\textup{(iv)}]
If $N>7$ and $N\equiv 0\pmod{7}$, then we have
\begin{eqnarray*}
\O_{1,N}(\mathbb{Q})&=&\mathbb{Q}\big[g_{1,7}(\tau), g_{1,7}(\tau)^{-1}, (g_{1,7}(\tau)-1)^{-1}, (g_{1,7}(\tau)^3-8g_{1,7}(\tau)^2+5g_{1,7}(\tau)+1)^{-1},\\
&&\quad f^1_{7,N}(N\tau)\big]\\
\O^1_N(\mathbb{Q})&=&\mathbb{Q}\big[g^1_7(\tau), g^1_7(\tau)^{-1}, (g^1_7(\tau)-1)^{-1}, (g^1_7(\tau)^3-8g^1_7(\tau)^2+5g^1_7(\tau)+1)^{-1}, f^1_{7,N}(\tau)\big].
\end{eqnarray*}
\item[\textup{(v)}]
If $N>9$ and $N\equiv 0\pmod{9}$, then we have
\begin{eqnarray*}
\O_{1,N}(\mathbb{Q})&=&\mathbb{Q}\big[g_{1,9}(\tau), g_{1,9}(\tau)^{-1}, (g_{1,9}(\tau)-1)^{-1}, (g_{1,9}(\tau)^2-g_{1,9}(\tau)+1)^{-1}, \\
&&\quad(g_{1,9}(\tau)^3-6g_{1,9}(\tau)^2+3g_{1,9}(\tau)+1)^{-1}, f^1_{9,N}(N\tau)\big]\\
\O^1_N(\mathbb{Q})&=&\mathbb{Q}\big[g^1_9(\tau), g^1_9(\tau)^{-1}, (g^1_9(\tau)-1)^{-1}, (g^1_9(\tau)^2-g^1_9(\tau)+1)^{-1}, \\
&&\quad(g^1_9(\tau)^3-6g^1_9(\tau)^2+3g^1_9(\tau)+1)^{-1}, f^1_{9,N}(\tau)\big].
\end{eqnarray*}

\end{itemize}
\end{corollary}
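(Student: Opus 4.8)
The plan is to deduce Corollary \ref{main corollary} by simply combining Theorem \ref{generator} with Theorem \ref{generator2}, specialized to the base level $m\in\{4,5,6,7,9\}$. By the ring isomorphism (\ref{ring isomorphism}) it suffices in each case to establish the statement for $\O^1_N(\mathbb{Q})$, and then transport along $h(\tau)\mapsto h(N\tau)$. Fix $m$ to be one of $4,5,6,7,9$. Since $N>m$ and $m\mid N$, Theorem \ref{generator2} gives
\begin{equation*}
\O^1_N(\mathbb{Q})=\O^1_m(\mathbb{Q})\big[f^1_{m,N}(\tau)\big].
\end{equation*}
Now substitute the explicit description of $\O^1_m(\mathbb{Q})$ provided by Theorem \ref{generator}: for instance, for $m=4$ one has $C_4=\{16,0\}$ with minimal polynomials $x-16$ and $x$, so $\O^1_4(\mathbb{Q})=\mathbb{Q}\big[g^1_4(\tau),g^1_4(\tau)^{-1},(g^1_4(\tau)-16)^{-1}\big]$, and adjoining $f^1_{4,N}(\tau)$ yields exactly the claimed list. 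The cases $m=5,6,7,9$ are identical in structure, reading off the corresponding rows of Theorem \ref{generator} (the quadratic factor $x^2-11x-1$ for $m=5$; the factors $x-8$ and $x+1$ for $m=6$; the factors $x-1$ and $x^3-8x^2+5x+1$ for $m=7$; the factors $x-1$, $x^2-x+1$ and $x^3-6x^2+3x+1$ for $m=9$).

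For the statement about $\O_{1,N}(\mathbb{Q})$, I would apply the ring isomorphism (\ref{ring isomorphism}) $\O_{1,N}(\mathbb{Q})\xrightarrow{\sim}\O^1_N(\mathbb{Q})$, which by (\ref{field isomorphism}) sends $h(\tau)\mapsto h(\tau/N)$; its inverse sends a function $H(\tau)\in\O^1_N(\mathbb{Q})$ to $H(N\tau)$. Under this inverse, $g^1_m(\tau)=g_{1,m}(\tau/m)$ is carried to $g^1_m(N\tau)=g_{1,m}(N\tau/m)$. Here one must be slightly careful: the corollary writes $g_{1,m}(\tau)$, not $g_{1,m}(N\tau/m)$, on the $\O_{1,N}$ side. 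This is legitimate because $g_{1,m}(\tau)\in\mathcal{F}_{1,m}(\mathbb{Q})\subseteq\mathcal{F}_{1,N}(\mathbb{Q})$ already (as $\G_1(N)\subseteq\G_1(m)$), and indeed $g_{1,m}(\tau)$ is precisely the preimage of $g^1_m(\tau)$ under $\O_{1,m}(\mathbb{Q})\xrightarrow{\sim}\O^1_m(\mathbb{Q})$, hence also under $\O_{1,N}(\mathbb{Q})\xrightarrow{\sim}\O^1_N(\mathbb{Q})$ when we view $g^1_m$ inside $\O^1_N(\mathbb{Q})$; similarly $f^1_{m,N}(\tau)\mapsto f^1_{m,N}(N\tau)$. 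So applying the inverse isomorphism to the generator list for $\O^1_N(\mathbb{Q})$ gives exactly the displayed generators for $\O_{1,N}(\mathbb{Q})$, with polynomial expressions in $g_{1,m}(\tau)$ together with $f^1_{m,N}(N\tau)$.

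The only genuine content to check — and the step I would expect to need a sentence of justification — is that the hypotheses of Theorem \ref{generator2} are met, namely that each of $4,5,6,7,9$ is $>3$ (clear) and that $N>m$ with $m\mid N$ (this is exactly the hypothesis $N>m$, $N\equiv 0\pmod m$ in each item of the corollary). One should also note that Theorem \ref{generator} does apply to $m=4,5,6,7,9$ since these lie in the range $2\le m\le 10$, so the genus-zero description of $\O^1_m(\mathbb{Q})$ is available. Beyond this, the corollary is a formal consequence of the two theorems and the ring isomorphism (\ref{ring isomorphism}), so I would present the proof as a short deduction, writing out the $m=4$ case in full and remarking that the remaining cases $m=5,6,7,9$ follow verbatim upon substituting the corresponding rows of Theorem \ref{generator}. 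No new estimates, no new modularity verifications, and no Galois-theoretic input are required here: all of that work has already been done in the proofs of Theorems \ref{generator} and \ref{generator2}.
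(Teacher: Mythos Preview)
Your overall approach is correct and matches the paper's one-line proof (``immediate from Theorem \ref{generator} and \ref{generator2}''). The $\O^1_N(\mathbb{Q})$ side is handled perfectly.

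However, your transport argument for the $\O_{1,N}(\mathbb{Q})$ side contains an error. The level-$N$ isomorphism (\ref{ring isomorphism}) sends $h(\tau)\mapsto h(\tau/N)$, so its inverse sends $g^1_m(\tau)=g_{1,m}(\tau/m)$ to $g_{1,m}(N\tau/m)$, \emph{not} to $g_{1,m}(\tau)$. Your claim that ``$g_{1,m}(\tau)$ is precisely the preimage of $g^1_m(\tau)$ under $\O_{1,N}(\mathbb{Q})\xrightarrow{\sim}\O^1_N(\mathbb{Q})$'' confuses the level-$m$ isomorphism with the level-$N$ one; these are different maps when $N>m$.

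The fix is to drop the transport entirely: Theorem \ref{generator2} already provides the $\O_{1,N}$ statement
\[
\O_{1,N}(\mathbb{Q})=\O_{1,m}(\mathbb{Q})\big[f^1_{m,N}(N\tau)\big],
\]
and Theorem \ref{generator} already provides the explicit description of $\O_{1,m}(\mathbb{Q})$ in terms of $g_{1,m}(\tau)$. Substituting one into the other gives the $\O_{1,N}(\mathbb{Q})$ assertion directly, with $g_{1,m}(\tau)$ appearing exactly as stated. No appeal to (\ref{ring isomorphism}) is needed for this half.
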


\begin{proof}
It is immediate from Theorem \ref{generator} and \ref{generator2}.
\end{proof}

\begin{remark}
In order to construct $\O_{1,N}(\mathbb{Q})$ for arbitrary $N$, it suffices to find generators of the ring $\O_{1,p}(\mathbb{Q})$ for an odd prime $p\geq 11$.
\end{remark}

\section{Fricke families of level $N$}\label{Fricke families of level $N$}

For an integer $N~(>1)$, let
\begin{equation*}
\mathcal{R}_N=\{\mathbf{r}\in\mathbb{Q}^2~|~\textrm{$\mathbf{r}$ has the primitive denominator $N$} \}.
\end{equation*}
A family $\{h_\mathbf{r}(\tau)\}_{\mathbf{r}\in\mathcal{R}_N}$ of functions in $\F_N$ is called a \textit{Fricke family} of level $N$ if it satisfies the following conditions:
\begin{itemize}
\item[\textup{(i)}] Each $h_\mathbf{r}(\tau)$ is weakly holomorphic.
\item[\textup{(ii)}] $h_\mathbf{r}(\tau)$ depends only on $\pm\mathbf{r}\pmod{\mathbb{Z}^2}$.
\item[\textup{(iii)}] $h_\mathbf{r}(\tau)^\alpha=h_{{^t}\alpha\mathbf{r}}(\tau)$ for all $\alpha\in\mathrm{GL}_2(\mathbb{Z}/N\mathbb{Z})/\{\pm I_2\}$. 
\end{itemize}
Note that the set of Fricke functions $\{f_\mathbf{r}(\tau)\}_{\mathbf{r}\in\mathcal{R}_N}$ is a Fricke family of level $N$ by Proposition \ref{properties of Fricke}.
In this section, by $\mathrm{Fr}(N)$ we mean the set of all Fricke families of level $N$.
Then one can endow this set the natural ring structure:
\begin{eqnarray*}
\{h_\mathbf{r}(\tau)\}_{\mathbf{r}\in\mathcal{R}_N}+\{k_\mathbf{r}(\tau)\}_{\mathbf{r}\in\mathcal{R}_N}&=&\{(h_\mathbf{r}+k_\mathbf{r})(\tau)\}_{\mathbf{r}\in\mathcal{R}_N}\\
\{h_\mathbf{r}(\tau)\}_{\mathbf{r}\in\mathcal{R}_N}\cdot\{k_\mathbf{r}(\tau)\}_{\mathbf{r}\in\mathcal{R}_N}&=&\{(h_\mathbf{r}k_\mathbf{r})(\tau)\}_{\mathbf{r}\in\mathcal{R}_N}.
\end{eqnarray*}
Eum et al (\cite{E-K-S}) classify all Fricke families of level $N$ when $N\equiv 0\pmod{4}$ by using the relation between $\mathrm{Fr}(N)$ and $\O^1_N(\mathbb{Q})$ as follows:

\begin{proposition}\label{Fricke family}
For an integer $N>1$, the map
\begin{equation*}
\begin{array}{ccc}
\mathrm{Fr}(N) &\longrightarrow & \O^1_N(\mathbb{Q})\\
\{h_\mathbf{r}(\tau)\}_{\mathbf{r}\in\mathcal{R}_N}&\longmapsto& h_{\left[\begin{smallmatrix}1/N\\ 0\end{smallmatrix}\right]}(\tau)
\end{array}
\end{equation*}
is a well-defined ring isomorphism.
\end{proposition}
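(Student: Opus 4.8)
The plan is to establish that the map $\Phi\colon \{h_\mathbf{r}(\tau)\}_{\mathbf{r}\in\mathcal{R}_N}\mapsto h_{\left[\begin{smallmatrix}1/N\\ 0\end{smallmatrix}\right]}(\tau)$ is a well-defined ring homomorphism, then exhibit an explicit inverse. First I would check that $h_{\left[\begin{smallmatrix}1/N\\ 0\end{smallmatrix}\right]}(\tau)$ really lies in $\O^1_N(\mathbb{Q})$: by condition (i) of the definition it is weakly holomorphic, and by condition (iii) applied to $\alpha\in G_N$ together with $\left[\begin{smallmatrix}1\\ 0\end{smallmatrix}\right]={^t}\alpha\left[\begin{smallmatrix}1\\ 0\end{smallmatrix}\right]$ in $(\mathbb{Z}/N\mathbb{Z})^2/\{\pm 1\}$ it is $G_N$-invariant, hence has rational Fourier coefficients by Proposition \ref{modular function action}; applying (iii) to $\alpha=\left[\begin{smallmatrix}a&b\\ c&d\end{smallmatrix}\right]\in\G^1(N)$ reduced mod $N$ shows it is invariant under $\G^1(N)$, exactly as in the proof of Lemma \ref{Fricke belongs to}(i). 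That $\Phi$ respects addition and multiplication is immediate from the componentwise ring structure on $\mathrm{Fr}(N)$.

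Next I would construct the inverse. Given $h(\tau)\in\O^1_N(\mathbb{Q})\subset\F_N$, for each $\mathbf{r}\in\mathcal{R}_N$ choose $\alpha\in\mathrm{GL}_2(\mathbb{Z}/N\mathbb{Z})/\{\pm I_2\}$ with ${^t}\alpha\left[\begin{smallmatrix}1/N\\ 0\end{smallmatrix}\right]\equiv\mathbf{r}\pmod{\mathbb{Z}^2}$ — such $\alpha$ exists precisely because $\mathbf{r}$ has primitive denominator $N$, i.e. $N\mathbf{r}$ is a primitive vector mod $N$ — and set $h_\mathbf{r}(\tau)=h(\tau)^\alpha$. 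The key point is that this is well-defined: if ${^t}\alpha\left[\begin{smallmatrix}1/N\\ 0\end{smallmatrix}\right]\equiv{^t}\beta\left[\begin{smallmatrix}1/N\\ 0\end{smallmatrix}\right]\pmod{\mathbb{Z}^2}$ then ${^t}(\beta\alpha^{-1})$ fixes $\left[\begin{smallmatrix}1/N\\ 0\end{smallmatrix}\right]$ mod $\mathbb{Z}^2$, so $\beta\alpha^{-1}$ lies in the group $G_N\cdot\{\gamma\equiv\left[\begin{smallmatrix}1&0\\ *&1\end{smallmatrix}\right]\}$ of (\ref{Galois group}), which by Lemma \ref{Fricke belongs to}(ii) (or directly, since $h\in\F^1_N(\mathbb{Q})$) fixes $h$. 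Hence $h(\tau)^\alpha=h(\tau)^\beta$, and $\{h_\mathbf{r}(\tau)\}_{\mathbf{r}\in\mathcal{R}_N}$ is well-defined; conditions (ii) and (iii) of the Fricke family definition then hold by construction, and (i) holds because the Galois conjugates $h^\alpha$ of a weakly holomorphic function are again weakly holomorphic (the poles of $h^\alpha$ are supported on $\alpha^{-1}$ applied to the cusps, which are still cusps). This assignment $h\mapsto\{h_\mathbf{r}(\tau)\}$ is plainly a ring homomorphism $\O^1_N(\mathbb{Q})\to\mathrm{Fr}(N)$, and it is inverse to $\Phi$ on the nose: applying it to $\Phi(\{h_\mathbf{r}\})$ recovers the family because $\{h_\mathbf{r}\}$ already satisfies (iii), and applying $\Phi$ afterward returns $h_{\left[\begin{smallmatrix}1/N\\ 0\end{smallmatrix}\right]}=h$.

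The main obstacle is the well-definedness of the inverse, which rests on two facts: that every $\mathbf{r}\in\mathcal{R}_N$ is in the $\mathrm{GL}_2(\mathbb{Z}/N\mathbb{Z})$-orbit of $\left[\begin{smallmatrix}1/N\\ 0\end{smallmatrix}\right]$ under $\alpha\mapsto{^t}\alpha\mathbf{r}$ (a standard fact about primitive vectors over $\mathbb{Z}/N\mathbb{Z}$, provable by lifting to $\mathrm{SL}_2(\mathbb{Z})$), and that the stabilizer of $\left[\begin{smallmatrix}1/N\\ 0\end{smallmatrix}\right]\bmod{\mathbb{Z}^2}$ in $\mathrm{GL}_2(\mathbb{Z}/N\mathbb{Z})/\{\pm I_2\}$ is exactly the subgroup of (\ref{Galois group}) fixing $\F^1_N(\mathbb{Q})$. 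Both are essentially bookkeeping about (\ref{Galois group}), so no serious difficulty is expected; the verification that Galois conjugation preserves weak holomorphy is the only place one must argue about cusps rather than purely group-theoretically.
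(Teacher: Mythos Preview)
The paper does not actually prove this proposition: its entire proof is the citation ``\cite[Theorem 4.3]{E-K-S}''. Your proposal supplies a correct, self-contained argument in its place. The outline is the natural one --- check that $h_{\left[\begin{smallmatrix}1/N\\0\end{smallmatrix}\right]}$ lands in $\O^1_N(\mathbb{Q})$ using conditions (i) and (iii), then build the inverse by transporting $h$ along $\mathrm{GL}_2(\mathbb{Z}/N\mathbb{Z})/\{\pm I_2\}$ --- and the two genuinely nontrivial points (transitivity on primitive vectors, and identification of the stabilizer of $\pm\left[\begin{smallmatrix}1/N\\0\end{smallmatrix}\right]$ with the group in (\ref{Galois group})) are both correctly isolated and handled. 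Your remark that weak holomorphy is preserved under the Galois action is also right: decompose $\alpha$ as an element of $G_N$ (which only touches Fourier coefficients) times an element of $\mathrm{SL}_2(\mathbb{Z}/N\mathbb{Z})$ (whose lift to $\mathrm{SL}_2(\mathbb{Z})$ permutes cusps). Since the paper offers no argument of its own to compare against, there is nothing further to say beyond that your proof is sound and is presumably close in spirit to what appears in \cite{E-K-S}.
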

\begin{proof}
\cite[Theorem 4.3]{E-K-S}.
\end{proof}

\begin{theorem}\label{classify Fricke families}
Let $N>1$ be a positive integer.
Then a family $\{h_\mathbf{r}(\tau)\}_{\mathbf{r}\in\mathcal{R}_N}$ of functions in $\F_N$ is a Fricke family of level $N$ if and only if the following condition holds:
\begin{itemize}
\item[\textup{(i)}] 
When $N\equiv 0\pmod{4}$, there exists a polynomial $P(x_1,x_2,x_3,x_4)\in\mathbb{Q}[x_1,x_2,x_3,x_4]$ such that
\begin{equation*}
h_\mathbf{r}(\tau)=P\big(\mathfrak{g}_\mathbf{r}(\tau), \mathfrak{g}_\mathbf{r}(\tau)^{-1}, (\mathfrak{g}_\mathbf{r}(\tau)-16)^{-1}, \mathfrak{f}_\mathbf{r}(\tau)\big)
\end{equation*}
for all $\mathbf{r}\in\mathcal{R}_N$, where 
\begin{equation*}
\mathfrak{g}_\mathbf{r}(\tau)=\frac{g_{(N/2)\mathbf{r}}(\tau)^{8}}{g_{(N/4)\mathbf{r}}(\tau)^{8}}\quad\textrm{and}\quad
\mathfrak{f}_\mathbf{r}(\tau)=\frac{f_{\mathbf{r}}(\tau)-f_{(N/4)\mathbf{r}}(\tau)}{f_{(N/2)\mathbf{r}}(\tau)-f_{(N/4)\mathbf{r}}(\tau)}.
\end{equation*}

\item[\textup{(ii)}]
When $N\equiv 0\pmod{5}$, there exists a polynomial $P(x_1,x_2,x_3,x_4)\in\mathbb{Q}[x_1,x_2,x_3,x_4]$ such that
\begin{equation*}
h_\mathbf{r}(\tau)=P\big(\mathfrak{g}_\mathbf{r}(\tau), \mathfrak{g}_\mathbf{r}(\tau)^{-1}, (\mathfrak{g}_\mathbf{r}(\tau)^2-11\mathfrak{g}_\mathbf{r}(\tau)-1)^{-1}, \mathfrak{f}_\mathbf{r}(\tau)\big)
\end{equation*}
for all $\mathbf{r}\in\mathcal{R}_N$, where 
\begin{equation*}
\mathfrak{g}_\mathbf{r}(\tau)=\frac{g_{(2N/5)\mathbf{r}}(\tau)^{5}}{g_{(N/5)\mathbf{r}}(\tau)^{5}}\quad\textrm{and}\quad
\mathfrak{f}_\mathbf{r}(\tau)=\frac{f_{\mathbf{r}}(\tau)-f_{(N/5)\mathbf{r}}(\tau)}{f_{(2N/5)\mathbf{r}}(\tau)-f_{(N/5)\mathbf{r}}(\tau)}.
\end{equation*}

\item[\textup{(iii)}]
When $N\equiv 0\pmod{6}$, there exists a polynomial $P(x_1,x_2,x_3,x_4,x_5)\in\mathbb{Q}[x_1,x_2,x_3,x_4,x_5]$ such that
\begin{equation*}
h_\mathbf{r}(\tau)=P\big(\mathfrak{g}_\mathbf{r}(\tau), \mathfrak{g}_\mathbf{r}(\tau)^{-1}, (\mathfrak{g}_\mathbf{r}(\tau)-8)^{-1}, (\mathfrak{g}_\mathbf{r}(\tau)+1)^{-1}, \mathfrak{f}_\mathbf{r}(\tau)\big)
\end{equation*}
for all $\mathbf{r}\in\mathcal{R}_N$, where 
\begin{equation*}
\mathfrak{g}_\mathbf{r}(\tau)=\frac{g_{(N/2)\mathbf{r}}(\tau)^{3}}{g_{(N/6)\mathbf{r}}(\tau)^{3}}\quad\textrm{and}\quad
\mathfrak{f}_\mathbf{r}(\tau)=\frac{f_{\mathbf{r}}(\tau)-f_{(N/6)\mathbf{r}}(\tau)}{f_{(N/3)\mathbf{r}}(\tau)-f_{(N/6)\mathbf{r}}(\tau)}.
\end{equation*}

\item[\textup{(iv)}]
When $N\equiv 0\pmod{7}$, there exists a polynomial $P(x_1,x_2,x_3,x_4,x_5)\in\mathbb{Q}[x_1,x_2,x_3,x_4,x_5]$ such that
\begin{equation*}
h_\mathbf{r}(\tau)=P\big(\mathfrak{g}_\mathbf{r}(\tau), \mathfrak{g}_\mathbf{r}(\tau)^{-1}, 
(\mathfrak{g}_\mathbf{r}(\tau)-1)^{-1}, 
(\mathfrak{g}_\mathbf{r}(\tau)^3-8\mathfrak{g}_\mathbf{r}(\tau)^2+5\mathfrak{g}_\mathbf{r}(\tau)+1)^{-1}, \mathfrak{f}_\mathbf{r}(\tau)\big)
\end{equation*}
for all $\mathbf{r}\in\mathcal{R}_N$, where 
\begin{equation*}
\mathfrak{g}_\mathbf{r}(\tau)=\frac{g_{(2N/7)\mathbf{r}}(\tau)^{2}g_{(3N/7)\mathbf{r}}(\tau)}{g_{(N/7)\mathbf{r}}(\tau)^{3}}\quad\textrm{and}\quad
\mathfrak{f}_\mathbf{r}(\tau)=\frac{f_{\mathbf{r}}(\tau)-f_{(N/7)\mathbf{r}}(\tau)}{f_{(2N/7)\mathbf{r}}(\tau)-f_{(N/7)\mathbf{r}}(\tau)}.
\end{equation*}

\item[\textup{(v)}]
When $N\equiv 0\pmod{9}$, there exists a polynomial $P(x_1,x_2,x_3,x_4,x_5,x_6)\in\mathbb{Q}[x_1,x_2,x_3,x_4,x_5,x_6]$ such that
\begin{equation*}
\begin{array}{ccl}
h_\mathbf{r}(\tau)&=&P\big(\mathfrak{g}_\mathbf{r}(\tau), \mathfrak{g}_\mathbf{r}(\tau)^{-1}, 
(\mathfrak{g}_\mathbf{r}(\tau)-1)^{-1}, 
(\mathfrak{g}_\mathbf{r}(\tau)^2-\mathfrak{g}_\mathbf{r}(\tau)+1)^{-1},\\
&&\quad
(\mathfrak{g}_\mathbf{r}(\tau)^3-6\mathfrak{g}_\mathbf{r}(\tau)^2+3\mathfrak{g}_\mathbf{r}(\tau)+1)^{-1}, \mathfrak{f}_\mathbf{r}(\tau)\big)
\end{array}
\end{equation*}
for all $\mathbf{r}\in\mathcal{R}_N$, where 
\begin{equation*}
\mathfrak{g}_\mathbf{r}(\tau)=\frac{g_{(2N/9)\mathbf{r}}(\tau)g_{(4N/9)\mathbf{r}}(\tau)}{g_{(N/9)\mathbf{r}}(\tau)^{2}}\quad\textrm{and}\quad
\mathfrak{f}_\mathbf{r}(\tau)=\frac{f_{\mathbf{r}}(\tau)-f_{(N/9)\mathbf{r}}(\tau)}{f_{(2N/9)\mathbf{r}}(\tau)-f_{(N/9)\mathbf{r}}(\tau)}.
\end{equation*}
\end{itemize}
\end{theorem}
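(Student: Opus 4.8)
The plan is to deduce Theorem \ref{classify Fricke families} directly from Proposition \ref{Fricke family}, Corollary \ref{main corollary}, and the transformation behaviour of Siegel and Fricke functions recorded in Propositions \ref{property of Siegel functions} and \ref{properties of Fricke}. By Proposition \ref{Fricke family} the ring $\mathrm{Fr}(N)$ is isomorphic to $\O^1_N(\mathbb{Q})$ via evaluation of the family at $\mathbf{r}=\left[\begin{smallmatrix}1/N\\ 0\end{smallmatrix}\right]$, so it suffices to show that, for each congruence class of $N$, the functions $\mathfrak{g}_\mathbf{r}(\tau)$ and $\mathfrak{f}_\mathbf{r}(\tau)$ displayed in (i)--(v) are themselves Fricke families of level $N$ and that, evaluated at $\mathbf{r}=\left[\begin{smallmatrix}1/N\\ 0\end{smallmatrix}\right]$, they reduce precisely to the generators $g^1_m(\tau)$ (equivalently $g_{1,m}(N\tau/N)$ after the appropriate normalisation) and $f^1_{m,N}(\tau)$ appearing in Corollary \ref{main corollary} for the relevant $m\in\{4,5,6,7,9\}$.

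First I would verify the ``only if'' direction. Let $\{h_\mathbf{r}(\tau)\}_{\mathbf{r}\in\mathcal{R}_N}$ be a Fricke family; then $h_0:=h_{\left[\begin{smallmatrix}1/N\\ 0\end{smallmatrix}\right]}(\tau)\in\O^1_N(\mathbb{Q})$ by Proposition \ref{Fricke family}. Taking $N\equiv 0\pmod 4$ as the model case, Corollary \ref{main corollary}(i) gives $h_0=P\big(g^1_4(\tau), g^1_4(\tau)^{-1}, (g^1_4(\tau)-16)^{-1}, f^1_{4,N}(\tau)\big)$ for some $P\in\mathbb{Q}[x_1,x_2,x_3,x_4]$. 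Now I claim $\mathfrak{g}_{\left[\begin{smallmatrix}1/N\\ 0\end{smallmatrix}\right]}(\tau)=g^1_4(\tau)$ and $\mathfrak{f}_{\left[\begin{smallmatrix}1/N\\ 0\end{smallmatrix}\right]}(\tau)=f^1_{4,N}(\tau)$: indeed $(N/2)\left[\begin{smallmatrix}1/N\\ 0\end{smallmatrix}\right]=\left[\begin{smallmatrix}1/2\\ 0\end{smallmatrix}\right]$ and $(N/4)\left[\begin{smallmatrix}1/N\\ 0\end{smallmatrix}\right]=\left[\begin{smallmatrix}1/4\\ 0\end{smallmatrix}\right]$, so $\mathfrak{g}_{\left[\begin{smallmatrix}1/N\\ 0\end{smallmatrix}\right]}(\tau)=g_{\left[\begin{smallmatrix}1/2\\ 0\end{smallmatrix}\right]}(\tau)^8/g_{\left[\begin{smallmatrix}1/4\\ 0\end{smallmatrix}\right]}(\tau)^8$, which is exactly $g^1_4(\tau)=g_{1,4}(\tau/4)$ after unwinding the table for $g_{1,4}$; a similar substitution identifies $\mathfrak{f}_{\left[\begin{smallmatrix}1/N\\ 0\end{smallmatrix}\right]}$ with $f^1_{4,N}$. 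Hence the very same polynomial $P$ satisfies $h_{\left[\begin{smallmatrix}1/N\\ 0\end{smallmatrix}\right]}(\tau)=P\big(\mathfrak{g}_{\left[\begin{smallmatrix}1/N\\ 0\end{smallmatrix}\right]}, \mathfrak{g}_{\left[\begin{smallmatrix}1/N\\ 0\end{smallmatrix}\right]}^{-1}, (\mathfrak{g}_{\left[\begin{smallmatrix}1/N\\ 0\end{smallmatrix}\right]}-16)^{-1}, \mathfrak{f}_{\left[\begin{smallmatrix}1/N\\ 0\end{smallmatrix}\right]}\big)$. To upgrade this identity from the single vector $\left[\begin{smallmatrix}1/N\\ 0\end{smallmatrix}\right]$ to all $\mathbf{r}\in\mathcal{R}_N$, I apply an element $\alpha\in\mathrm{GL}_2(\mathbb{Z}/N\mathbb{Z})/\{\pm I_2\}$ with ${^t}\alpha\left[\begin{smallmatrix}1/N\\ 0\end{smallmatrix}\right]=\mathbf{r}$ to both sides: Proposition \ref{properties of Fricke}(ii) and Proposition \ref{property of Siegel functions}(i)--(ii) (noting that the exponent sums in each $\mathfrak{g}_\mathbf{r}$ are $\equiv 0\pmod{12}$, so the Galois action permutes the Siegel factors by the rule $g_\mathbf{r}\mapsto g_{{^t}\alpha\mathbf{r}}$) show that the $\alpha$-action sends $\mathfrak{g}_{\left[\begin{smallmatrix}1/N\\ 0\end{smallmatrix}\right]}\mapsto\mathfrak{g}_\mathbf{r}$ and $\mathfrak{f}_{\left[\begin{smallmatrix}1/N\\ 0\end{smallmatrix}\right]}\mapsto\mathfrak{f}_\mathbf{r}$, while $h_{\left[\begin{smallmatrix}1/N\\ 0\end{smallmatrix}\right]}\mapsto h_\mathbf{r}$ by condition (iii) of a Fricke family; since $P$ has rational coefficients it commutes with the Galois action, yielding $h_\mathbf{r}(\tau)=P\big(\mathfrak{g}_\mathbf{r}, \mathfrak{g}_\mathbf{r}^{-1}, (\mathfrak{g}_\mathbf{r}-16)^{-1}, \mathfrak{f}_\mathbf{r}\big)$ for every $\mathbf{r}$. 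The remaining four congruence classes are handled verbatim, reading the generator sets off Corollary \ref{main corollary}(ii)--(v) and the tables of $g_{1,m}$.

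For the ``if'' direction I must check that any family of the displayed form is in fact a Fricke family. Weak holomorphy (condition (i)) and dependence only on $\pm\mathbf{r}\bmod\mathbb{Z}^2$ (condition (ii)) follow because each $\mathfrak{g}_\mathbf{r}$ is a product of Siegel functions whose parameters are integer multiples of $\mathbf{r}$ (so they depend only on $\mathbf{r}\bmod\mathbb{Z}^2$, and the $\pm$ ambiguity is killed by the even exponents, or more directly because $\mathfrak{g}_\mathbf{r}$ and $\mathfrak{f}_\mathbf{r}$ are Weierstrass units, rational functions of Fricke functions, which satisfy Proposition \ref{properties of Fricke}(i)), and because the defining inequalities make each $\mathfrak{g}_\mathbf{r}$ a modular unit lying in $\F_N$ by Proposition \ref{property of Siegel functions}(ii); the $16$, $-8$, $+1$, etc.\ are the cusp values $C_m$ from the $C_N$-table, so the indicated inverses are again modular units, and hence weakly holomorphic. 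Condition (iii) is precisely the covariance $\mathfrak{g}_\mathbf{r}^\alpha=\mathfrak{g}_{{^t}\alpha\mathbf{r}}$, $\mathfrak{f}_\mathbf{r}^\alpha=\mathfrak{f}_{{^t}\alpha\mathbf{r}}$ established in the previous paragraph, together with the fact that $P$ has rational coefficients. The main obstacle, such as it is, is the bookkeeping in the ``only if'' step: one must confirm for each $m\in\{4,5,6,7,9\}$ that $\mathfrak{g}_{\left[\begin{smallmatrix}1/N\\ 0\end{smallmatrix}\right]}(\tau)$ genuinely equals $g^1_m(\tau)$ as functions (not merely up to a constant or a root of unity), which amounts to matching the normalisations in the table of $g_{1,m}(\tau)$ with the Siegel-function products appearing in $\mathfrak{g}_\mathbf{r}$, and likewise that the cusp-value polynomials in Corollary \ref{main corollary} are exactly the $f_{N,c}$ for $c\in C_m$; once these identifications are pinned down the proof is a direct transport of Corollary \ref{main corollary} across the isomorphism of Proposition \ref{Fricke family}.
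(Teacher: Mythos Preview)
Your approach is essentially the same as the paper's: one verifies $g^1_m(\tau)^{\alpha_\mathbf{r}}=\mathfrak{g}_\mathbf{r}(\tau)$ and $f^1_{m,N}(\tau)^{\alpha_\mathbf{r}}=\mathfrak{f}_\mathbf{r}(\tau)$ for $\alpha_\mathbf{r}\in\mathrm{SL}_2(\mathbb{Z}/N\mathbb{Z})$ with first row $(Nr_1,Nr_2)$ (the paper stays in $\mathrm{SL}_2$, so Proposition~\ref{property of Siegel functions}(i) suffices and no separate $G_N$ argument is needed), and then transports the generator description of $\O^1_N(\mathbb{Q})$ across the isomorphism of Proposition~\ref{Fricke family}. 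The one point you omit is the edge case $N=m$, where $\mathfrak{f}_\mathbf{r}\equiv 0$ and Corollary~\ref{main corollary} does not apply; the paper handles this by citing Theorem~\ref{generator} directly.
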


\begin{proof}
For $\mathbf{r}=\left[\begin{matrix}a/N\\ b/N\end{matrix}\right]\in\mathcal{R}_N$, we can take $\alpha_\mathbf{r}=\left[\begin{matrix}a&b\\ c&d\end{matrix}\right]\in\mathrm{SL}_2(\mathbb{Z}/N\mathbb{Z})$ with $c,d\in\mathbb{Z}$.
For each $m=4,5,6,7$ and $9$, if $N>m$ and $N\equiv 0\pmod{m}$, then we achieve by Proposition \ref{property of Siegel functions} (i) that 
\begin{eqnarray*}
g^1_m(\tau)^{\alpha_\mathbf{r}}&=&\mathfrak{g}_\mathbf{r}(\tau)\\
f^1_{m,N}(\tau)^{\alpha_\mathbf{r}}&=&\mathfrak{f}_\mathbf{r}(\tau).
\end{eqnarray*}
Here we note that $\mathfrak{f}_\mathbf{r}(\tau)=0$ if $N=m$.
Then the theorem is an immediate consequence of Theorem \ref{generator}, Corollary \ref{main corollary} and Proposition \ref{Fricke family}.

\end{proof}

\bibliographystyle{amsplain}

\address{% First Author
Ja Kyung Koo\\
Department of Mathematical Sciences \\
KAIST \\
Daejeon 305-701 \\
Republic of Korea} {jkkoo@math.kaist.ac.kr}
%%%%%%%%%
\address{% Corresponding Author
Dong Sung Yoon\\
National Institute for Mathematical Sciences \\
Daejeon 305-811 \\
Republic of Korea} {dsyoon@nims.re.kr}

\end{document}